\definecolor{verylight}{gray}{0.97}
\definecolor{light}{gray}{0.9}
\definecolor{medium}{gray}{0.85}
\definecolor{dark}{gray}{0.6}
\def\NZQ{\mathbb}               % the font for N,Z,Q,R,C
\def\NN{{\NZQ N}}
\def\ZZ{{\NZQ Z}}
\def\KK{{\NZQ K}}
\def\frk{\mathfrak}               % font for "Fraktur"
\def\mm{{\frk m}}
\def\KK{{\NZQ K}}
\renewcommand{\qedsymbol}{$\square$} %????????????
\def\G{{\mathcal G}}
\def\P{{\mathcal P}}
\def\dstab{\operatorname{dstab}} % height of the idealdstab
\def\0b{{\mathbf 0}}
\def\reg{\operatorname {reg}}
\def\sign{\operatorname {sign}}
\def\height{\operatorname{ht}}
\def\depth{\operatorname{depth}}
\def\opn#1#2{\def#1{\operatorname{#2}}} % to make operators
\opn\chara{char} \opn\length{\ell} \opn\pd{pd} \opn\rk{rk}
\opn\projdim{proj\,dim} \opn\injdim{inj\,dim} \opn\rank{rank}
\opn\depth{depth} \opn\grade{grade} \opn\height{height}
\opn\embdim{emb\,dim} \opn\codim{codim}
\opn\Tr{Tr} \opn\bigrank{big\,rank}
\opn\superheight{superheight}\opn\lcm{lcm}
\opn\trdeg{tr\,deg}%\emph{
	\opn\reg{reg} \opn\lreg{lreg} \opn\ini{in} \opn\lpd{lpd}
	\opn\size{size} \opn\sdepth{sdepth}
	\opn\link{link}\opn\fdepth{fdepth}\opn\lex{lex}
	\opn\tr{tr}
	\opn\type{type}
	\opn\gap{gap}
	\opn\arithdeg{arith-deg}
	\opn\HS{HS}
	\opn\GL{GL}
	\opn\div{div} \opn\Div{Div} \opn\cl{cl} \opn\Cl{Cl}
	\opn\Spec{Spec} \opn\Supp{Supp} \opn\supp{supp} \opn\Sing{Sing}
	\opn\Ass{Ass} \opn\Min{Min}\opn\Mon{Mon}
	\opn\Ann{Ann} \opn\Rad{Rad} \opn\Soc{Soc}\opn\Deg{Deg}
	\opn\Im{Im} \opn\Ker{Ker} \opn\Coker{Coker} \opn\Am{Am}
	\opn\Hom{Hom} \opn\Tor{Tor} \opn\Ext{Ext} \opn\End{End}
	\opn\Aut{Aut} \opn\id{id}
	\opn\nat{nat}
	\opn\pff{pf}%   \pf exists already
	\opn\Pf{Pf} \opn\GL{GL} \opn\SL{SL} \opn\mod{mod} \opn\ord{ord}
	\opn\Gin{Gin} \opn\Hilb{Hilb}\opn\sort{sort}
	\opn\PF{PF}\opn\Ap{Ap}
	\opn\mult{mult}
	\opn\bight{bight}
	\opn\aff{aff}
	\opn\relint{relint} \opn\st{st}
	\opn\lk{lk} \opn\cn{cn} \opn\core{core} \opn\vol{vol}  \opn\inp{inp} \opn\nilpot{nilpot}
	\opn\link{link} \opn\star{star}\opn\lex{lex}\opn\set{set}
	\opn\width{wd}
	\opn\Fr{F}
	\opn\QF{QF}
	\opn\G{G}
	\opn\type{type}\opn\res{res}
	\opn\conv{conv}
	\opn\Ind{Ind}
	\opn\gr{gr}
	\def\pot#1#2{#1[\kern-0.28ex[#2]\kern-0.28ex]}
	\opn\dirlim{\underrightarrow{\lim}}
	\opn\inivlim{\underleftarrow{\lim}}
	\let\to=\rightarrow
	\def\Implies{\ifmmode\Longrightarrow \else
		\unskip${}\Longrightarrow{}$\ignorespaces\fi}
	\def\implies{\ifmmode\Rightarrow \else
		\unskip${}\Rightarrow{}$\ignorespaces\fi}
	\def\iff{\ifmmode\Longleftrightarrow \else
		\unskip${}\Longleftrightarrow{}$\ignorespaces\fi}
	\newtheorem{Theorem}{Theorem}[section]
	\newtheorem{Lemma}[Theorem]{Lemma}
	\newtheorem{Corollary}[Theorem]{Corollary}
	\newtheorem{Proposition}[Theorem]{Proposition}
	\newtheorem{Example}[Theorem]{Example}
	\newtheorem{Definition}[Theorem]{Definition}
	\let\epsilon\varepsilon
	\let\kappa=\varkappa
	\def\qed{\ifhmode\textqed\fi
		\ifmmode\ifinner\quad\qedsymbol\else\dispqed\fi\fi}
	\def\textqed{\unskip\nobreak\penalty50
		\hskip2em\hbox{}\nobreak\hfil\qedsymbol
		\parfillskip=0pt \finalhyphendemerits=0}
	\def\dispqed{\rlap{\qquad\qedsymbol}}
	\opn\dis{dis}
	\def\pnt{{\raise0.5mm\hbox{\large\bf.}}}
	\opn\Lex{Lex}
\begin{document}
		%\linenumbers
	\title {Depth and regularity  of powers of edge ideals of  edge-weighted  trees}
	%	\date{\today}
		
\author[T.T. Hien]{Truong Thi Hien}
\address{Faculty of Natural Sciences, Hong Duc University, No. 565 Quang Trung, Hac Thanh, Thanh Hoa, Vietnam}
\email{hientruong86@gmail.com}		
		
\author{Jiaxin Li}
\address{School of Mathematical Sciences, Soochow University, Suzhou, Jiangsu, 215006, P. R.~China}
\email{lijiaxinworking@163.com}

\author{Tran Nam Trung}
\address{Institute of Mathematics, Vietnam Academy of Science and Technology, 18 Hoang Quoc Viet, 10072 Hanoi, Vietnam}
\email{tntrung@math.ac.vn}

\author{Guangjun Zhu$^{\ast}$}
\address{School of Mathematical Sciences, Soochow University, Suzhou, Jiangsu, 215006, P. R.~China}
\email{zhuguangjun@suda.edu.cn}

		 \thanks{ * Corresponding author.}
		%	\linebreak[4]
		
		\thanks{2020 {\em Mathematics Subject Classification}.
			Primary 13C15,13C10; Secondary  05E40, 05C05}

		\thanks{Keywords: Depth, regularity, edge ideal, strictly increasing weighted tree}

		% \subjclass[2010]{Primary 13C99; Secondary 13E15, 13A15.}
		%		13H10   	Special types (Cohen-Macaulay, Gorenstein, Buchsbaum, etc.)
		%		13D02   	Syzygies, resolutions, complexes
		%		05E40   	Combinatorial aspects of commutative algebra
		%		16S36   	Ordinary and skew polynomial rings and semigroup rings
		
		%		14M25   	Toric varieties, Newton polyhedra [See also 52B20]
		%		13A02   	Graded rings
		%		13F20   	Polynomial rings and ideals; rings of integer-valued polynomials
		%		13A18   	Valuations and their generalizations
		%		06A11   	Algebraic aspects of posets
		%       05C38       Paths and cycles
		%       13A15       Ideals; multiplicative ideal theory
		%       13F20       Polynomial rings and ideals; rings of integer-valued
		
		%\keywords{   }

		\begin{abstract}  For an increasing weighted tree $G_\omega$, we  obtain an asymptotic value and  a sharp bound on the index stability of the depth function of its  edge ideal $I(G_\omega)$.  Moreover, if $G_\omega$ is a strictly increasing weighted tree, we provide the minimal free resolution of $I(G_\omega)$ and an  exact formula for the regularity of all powers of   $I(G_\omega)$.
		\end{abstract}
		\setcounter{tocdepth}{1}
		%\tableofcontents
		
		\maketitle
		%	\section*{Introduction}
\section{Introduction}
Let $G$ be a finite simple  graph with the vertex set $V(G)$ and  the edge set $E(G)$. We write $xy$ for  an  edge of $G$ with  endpoints $x$ and $y$.
Suppose   $\omega \colon E(G) \to \ZZ_{>0}$ is a weight function on $E(G)$. The pair $(G,\omega)$  is called  an {\it edge-weighted graph} (or simply a weighted graph) with the underlying graph  $G$ and is denoted by  $G_\omega$.   

Let $S=\KK[x_{1},\dots, x_{n}]$ be a polynomial ring of $n$ variables over a field $\KK$. Assume that  $V(G)=\{x_1,\ldots,x_n\}$.  The  {\em edge-weighted ideal}  (or simply the edge ideal)  of   $G_\omega$  was introduced in \cite{PS} and  is defined as the ideal of $S$ by
\[
I(G_\omega) = ((x_ix_j)^{\omega(x_ix_j)}\mid x_i x_j\in E).
\]
If $\omega \colon E(G) \to \ZZ_{>0}$ is a constant function with value one, then $I(G_\omega)$
is just the edge ideal $I(G)$ of the underlying graph $G$. This ideal has been  studied extensively in the literature (see \cite{ABS, BHT,FM,LTT,M,T}).

Brodmann  proved in \cite{B} that for a  proper homogeneous ideal   $I\subset S$, the depth function $\depth (S/I^t)$  is constant for $t\gg 0$ and bounded above by $n-\ell (I)$, where $\ell(I)$ is the analytic spread of $I$. If the associated graded ring of $I$ is Cohen–Macaulay, then   $\depth (S/I^t)$ attains this upper bound (see \cite[Proposition 3.3]{EH}).  For example, if $I$ is a polymatroidal ideal, then this occurs
(see \cite[Corollary 3.5]{HRV}).
 The first position from which  $\depth (S/I^t)$ becomes constant is called the stability index of the depth function of $I$ and is denoted by $\dstab(I)$. Brodmann's result raises the question of  whether  $\dstab(I)$
 and the asymptotic value of  $\depth (S/I^t)$ can be characterized explicitly in terms of  $I$, but this  problem is difficult  because the depth function of an ideal can be any convergent function (see \cite{HHTT, HH1}).

 There  are few basic results about the nature of $\dstab(I)$ and the asymptotic value of  $\depth (S/I^t)$, even when $I$ is a squarefree monomial ideal.  However, if  $I$ is the edge ideal of a simple graph, then these problems are completely solved (see \cite{LTT, T}). Note that the associated graded ring of such an ideal may not be  Cohen-Macaulay.

For a  proper homogeneous ideal  $I\subset S$, it is well known that  the regularity  function $\reg(I^t)$ is asymptotically a linear function for $t\gg  0$ (see \cite{CHT,K}). That is, there exist  constants  $a$, $b$ and $t_0$ such that $\reg(I^t)=at+b$ for all $t\geq t_0$. While $a$ is well-defined (see \cite[Theorem 5]{K}), a little information is known about $b$ and $t_0$. Therefore, it is of interest to determine the exact form of this linear function and the stability index  $t_0$ at which $\reg(I^t)$  becomes linear (see \cite{Con, EH1, R}). It turns out that, even in the case of square-free monomial ideals, finding the linear function and $t_0$ is challenging (see \cite{ABS, BHT,JS1, MSY}).

 This paper is concerned with the regularity and depth of powers of the edge ideal of a weighted graph $G_\omega$. When $G_\omega$ is a weighted star graph, or an integrally closed weighted   path, or an integrally closed  weighted  cycle, the second and fourth authors, along with other collaborators, provided exact formulas for the regularity and depth of  powers of the edge ideal $I(G_\omega)$ (see \cite{LVZ,ZCLY,ZDCL,ZLCY}).  If $G_\omega$ is an integrally closed weighted  tree, the second and fourth authors provided exact formulas for the regularity of the edge ideal and  offered some linear upper bounds on the regularity of its powers (see \cite{LZD}).
 
 We say that $G_\omega$ is an increasing weighted tree if $G$  is a tree and there exists a vertex $r$, which is called a root of $G_\omega$, such that the weight function on every simple path from a leaf to the root $r$ is increasing. That is,
 if
$$\P\colon v_1\to v_2\to v_3\to \cdots \to v_k =r$$
is a  simple path from a leaf $v_1$ to $r$ of length at least $2$, then $\omega(v_iv_{i+1}) \leqslant \omega(v_{i+1}v_{i+2})$ for $i=1,\ldots, k-2$.
 If this  path also satisfies the condition that  $\omega(v_1v_2)=\omega(v_2v_3)$, then $v_2$ is called a {\it special} vertex. Let $s(r,G_\omega)$ be the number of special vertices and 
$$s(G_\omega) = \min\{s(r,G_\omega)\mid r \text{ is a root of } G_\omega\}.$$
 
If every simple path $\P \colon v_1\to v_2\to v_3\to \cdots \to v_k =r$
from a leaf $v_1$ to $r$ of length at least $2$ satisfies  $\omega(v_iv_{i+1}) < \omega(v_{i+1}v_{i+2})$ for $i=1,\ldots, k-2$, then $G_\omega$ is said to be 
 a strictly increasing weighted tree.

 Our first main result is the following theorem.

\medskip

\noindent {\bf Theorem \ref{theorem-depth}. \it 
Let $G_\omega$ be an increasing weighted tree. Then, 
$$\depth(S/I(G_\omega)^t)=1 \text{ for all } t\geqslant s(G_\omega)+1.$$
In particular, $\dstab(I(G_\omega))\leqslant s(G_\omega)+1$.
}
\medskip

We next prove that if $G_\omega$ is a strictly increasing weighted tree, then the Taylor complex of the edge ideal  $I(G_\omega)$ is the minimal free resolution of  $S/I(G_\omega)$ (see Theorem \ref{minimal}). As a result, we derive a formula for $\reg (I(G_\omega))$. This formula plays a key role in the proof of the following theorem.

\medskip

\noindent {\bf Theorem \ref{reg-power}. \it Let $G_\omega$ be a strictly increasing weighted tree. Then
$$\reg(I(G_\omega)^t)=2d(t-1) + \reg (I(G_\omega)) \text{ for all } t\geqslant 1,$$
where $d=\max\{\omega(e)\mid e\in E(G)\}$.
}
\medskip

The paper is organized as follows:  Section \ref{sec:prelim}  introduces the notation and provides background information. Section \ref{sec:depth} proves Theorem  \ref{theorem-depth}  and characterizes an increasing weighted tree, $G_\omega$, such that  the depth function, $\depth (S/I(G_\omega)^t)$, is constant. Section \ref{sec:regularity}  proves that the Taylor complex of  $I(G_\omega)$, where $G_\omega$ is a strictly increasing weighted tree, is the minimal free resolution of $S/I(G_\omega)$. This section also proves Theorem \ref{reg-power}.

\section{Preliminaries}
 \label{sec:prelim}

In this section, we collect definitions and basic facts that will be used throughout this paper. For more details, the reader is referred to \cite{D, HH2}.

Let $\KK$ be a field, and let $S=\KK[x_1, \ldots,x_n]$ be the polynomial ring in  $n$ variables $x_1,\ldots,x_n$ over the field $\KK$. Let $\mm=(x_1,\ldots,x_n)$ be the maximal homogeneous ideal of $S$. The focus of our work is the depth and Castelnuovo-Mumford regularity of homogeneous ideals of $S$, which can be defined in various ways.  For our purposes, we will use the definition that employs the minimal free resolution of graded $S$-modules.
Let $M$ be a finitely generated graded $S$-module, and let
$$0\to \bigoplus_{j\in\ZZ} S(-j)^{\beta_{p,j}(M)}\to\cdots\to\bigoplus_{j\in\ZZ} S(-j)^{\beta_{1,j}(M)}\to \bigoplus_{j\in\ZZ} S(-j)^{\beta_{0,j}(M)}\to M\to 0$$
be its minimal graded free resolution. Then, the 
projective dimension of $M$ is 
$$\pd(M)=p.$$ 
The regularity of $M$ is defined by
$$\reg(M )= \max\{j-i \mid \beta_{i,j}(M)\ne 0\}.$$
The depth of $M$ is given by the Auslander-Buchsbaum formula
$$\depth (M)= n-\pd(M) = n-p.$$
The number $\beta_{i,j}(M)$ is called the $(i,j)$-th graded Betti number of $M$, and the number
$$\beta_i(M) =  \sum_{j\in\ZZ} \beta_{i,j}(M)$$
is called the $i$-th Betti number of $M$.

The support of a monomial $f$ is $\supp(f) = \{x_i\mid x_i\text{ divides } f\}$. The support of a finite set $W$ of monomials is $\supp(W)=\{x_i\mid x_i\in \supp(f) \text{ for some } f\in W\}$.

For a monomial ideal $I\subseteq S$, let  $\mathcal{G}(I)$ denote the unique minimal set of  monomial generators of $I$, and the support of $I$, denoted by $\supp(I)$, is $\supp(\mathcal{G}(I))$.

\begin{Lemma} {\em (\cite[Lemma 2.2 and Lemma 3.2]{HT})}\label{sum1} 
	Let $S_1 = \mathbb{K}[x_1, \ldots, x_m]$, $S_2 = \mathbb{K}[x_{m+1}, \ldots, x_n]$ and $S = \mathbb{K}[x_1, \ldots, x_n]$ be three polynomial rings over $\mathbb{K}$, $I \subseteq S_1$ and $J \subseteq S_2$ be two proper non-zero homogeneous ideals. Then we have  
	\begin{itemize}		
	\item[(1)] $\depth(S/(I+J)) = \depth (S_1/I)+ \depth (S_2/J).$
        \item[(2)] $\reg(S/(I+J)) = \reg (S_1/I)+ \reg (S_2/J).$
	\end{itemize}
\end{Lemma}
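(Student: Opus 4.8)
The plan is to realize $S/(I+J)$ as a tensor product over $\KK$ and to build its minimal graded free resolution from the resolutions of the two factors. Since the two variable sets are disjoint, we have $S = S_1 \otimes_\KK S_2$ together with a graded $\KK$-algebra isomorphism $S/(I+J) \iso (S_1/I) \otimes_\KK (S_2/J)$. Let $(F_\bullet,\partial^F)$ be the minimal graded free resolution of $S_1/I$ over $S_1$ and $(G_\bullet,\partial^G)$ the minimal graded free resolution of $S_2/J$ over $S_2$. I would form the tensor product complex $F_\bullet \otimes_\KK G_\bullet$, whose term in homological degree $i$ is $\bigoplus_{i_1+i_2=i} F_{i_1}\otimes_\KK G_{i_2}$. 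Each summand $F_{i_1}\otimes_\KK G_{i_2}$ is a free $S$-module because $S=S_1\otimes_\KK S_2$, and, after identifying $S_1(-j)\otimes_\KK S_2(-k)\iso S(-j-k)$, the internal degree shifts add; thus $F_\bullet\otimes_\KK G_\bullet$ is a graded complex of free $S$-modules whose twists behave additively.

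The first key step is to show that $F_\bullet\otimes_\KK G_\bullet$ resolves $S/(I+J)$. Because $\KK$ is a field, every $\KK$-module is flat, so the Künneth formula applies with no correction terms and gives $H_i(F_\bullet\otimes_\KK G_\bullet)=\bigoplus_{i_1+i_2=i} H_{i_1}(F_\bullet)\otimes_\KK H_{i_2}(G_\bullet)$. Since $F_\bullet$ and $G_\bullet$ are resolutions, their homology is concentrated in degree $0$ and equals $S_1/I$ and $S_2/J$ respectively; hence the homology of the tensor complex is concentrated in degree $0$ and equals $(S_1/I)\otimes_\KK(S_2/J)\iso S/(I+J)$. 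The complex is moreover minimal: its differential acts as $\partial^F\otimes\id \pm \id\otimes\partial^G$, whose matrix entries are entries of $\partial^F$ or of $\partial^G$, all lying in $\mm$ by minimality of $F_\bullet$ and $G_\bullet$.

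Both invariants can then be read off this minimal resolution. Its length equals $\pd_{S_1}(S_1/I)+\pd_{S_2}(S_2/J)$, so $\pd_S(S/(I+J))=\pd_{S_1}(S_1/I)+\pd_{S_2}(S_2/J)$. Applying the Auslander–Buchsbaum formula in $S_1$, $S_2$ and $S$ and writing $n=m+(n-m)$ yields part (1):
$$\depth(S/(I+J)) = n-\pd_S(S/(I+J)) = \big(m-\pd_{S_1}(S_1/I)\big)+\big((n-m)-\pd_{S_2}(S_2/J)\big) = \depth(S_1/I)+\depth(S_2/J).$$
For part (2), minimality identifies the graded Betti numbers as
$$\beta_{i,j}^{S}(S/(I+J)) = \sum_{\substack{i_1+i_2=i\\ j_1+j_2=j}} \beta_{i_1,j_1}^{S_1}(S_1/I)\,\beta_{i_2,j_2}^{S_2}(S_2/J).$$
As all Betti numbers are nonnegative, $\beta_{i,j}^{S}(S/(I+J))\neq 0$ precisely when there is a splitting $i=i_1+i_2$, $j=j_1+j_2$ with both factors nonzero; maximizing $j-i=(j_1-i_1)+(j_2-i_2)$ over all such splittings gives $\reg(S/(I+J))=\reg(S_1/I)+\reg(S_2/J)$. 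Here the hypotheses that $I$ and $J$ are proper and nonzero guarantee that $S_1/I$ and $S_2/J$ are nonzero and that the invariants are well defined.

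The main obstacle is the structural claim that $F_\bullet\otimes_\KK G_\bullet$ is simultaneously exact and minimal; both points rest squarely on the disjointness of the variable sets, which is what makes $S=S_1\otimes_\KK S_2$ and lets the Künneth formula over the field $\KK$ apply without Tor corrections. Once this is established, the formulas for depth and regularity follow formally from the length and the twists of the resulting minimal resolution.
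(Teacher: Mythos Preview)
Your argument is correct and is the standard tensor-product (K\"unneth) proof of these additivity formulas. The paper itself does not prove this lemma; it is quoted without proof as \cite[Lemma~2.2 and Lemma~3.2]{HT}, so there is no ``paper's own proof'' to compare against. Your approach is exactly the one underlying the cited reference.
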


\begin{Lemma}{\em (\cite[Lemma 3.1]{HT})}\label{exact}
Let $0 \longrightarrow M \longrightarrow N \longrightarrow P \longrightarrow 0$ be a short exact sequence of finitely generated graded $S$-modules. Then 
$$\reg(N) \le \max\{\reg(M), \reg(P)\}.$$ 
The equality holds if  $\reg(P) \ne \reg(M)-1$.
\end{Lemma}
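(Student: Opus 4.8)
The plan is to deduce everything from the long exact sequence in $\Tor$ attached to the short exact sequence, using the Betti-number description of regularity recorded above, namely $\reg(X)=\max\{j-i\mid \Tor_i(X,\KK)_j\neq 0\}$. Tensoring $0\to M\to N\to P\to 0$ with $\KK$ over $S$ produces, in each internal degree $j$, the exact sequence
$$\cdots \to \Tor_{i+1}(P,\KK)_j \to \Tor_i(M,\KK)_j \to \Tor_i(N,\KK)_j \to \Tor_i(P,\KK)_j \to \Tor_{i-1}(M,\KK)_j \to \cdots,$$
in which every map is homogeneous of degree $0$. First I would read off three inequalities by chasing nonvanishing. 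Exactness at $\Tor_i(N,\KK)_j$ shows that $\Tor_i(N,\KK)_j\neq 0$ forces $\Tor_i(M,\KK)_j\neq 0$ or $\Tor_i(P,\KK)_j\neq 0$, whence
$$\reg(N)\le \max\{\reg(M),\reg(P)\};$$
this is already the first assertion. Exactness at $\Tor_i(M,\KK)_j$ gives $\Tor_i(M,\KK)_j\neq 0\Rightarrow \Tor_{i+1}(P,\KK)_j\neq 0$ or $\Tor_i(N,\KK)_j\neq 0$, and since a nonzero $\Tor_{i+1}(P,\KK)_j$ contributes $j-(i+1)\le \reg(P)$, this yields $\reg(M)\le \max\{\reg(P)+1,\reg(N)\}$. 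Symmetrically, exactness at $\Tor_i(P,\KK)_j$ gives $\reg(P)\le\max\{\reg(N),\reg(M)-1\}$.

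For the equality clause I would argue by comparing the regularities, writing $m=\reg(M)$, $p=\reg(P)$, $n=\reg(N)$ and assuming $p\neq m-1$. If $m\le p$, then $m-1<p$, so $p\le \max\{n,m-1\}$ forces $n\ge p$; combined with $n\le\max\{m,p\}=p$ this gives $n=p=\max\{m,p\}$. If instead $m>p$, then the hypothesis $p\neq m-1$ together with integrality gives $p\le m-2$, so $p+1<m$, and $m\le\max\{p+1,n\}$ forces $n\ge m$; combined with $n\le\max\{m,p\}=m$ this gives $n=m=\max\{m,p\}$. In both cases $\reg(N)=\max\{\reg(M),\reg(P)\}$, as claimed.

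The inequality part is routine; the only real content is the equality clause, where the work is to extract the two dual inequalities and to track the homological index shift in the connecting maps $\Tor_{i+1}(P,\KK)_j\to\Tor_i(M,\KK)_j$ and $\Tor_i(P,\KK)_j\to\Tor_{i-1}(M,\KK)_j$, since it is exactly these shifts that produce the $\pm 1$'s. I expect the case $m=p$ to be the most deceptive: one might fear that cancellation in the long exact sequence lowers $\reg(N)$, but the inequality $\reg(P)\le\max\{\reg(N),\reg(M)-1\}$ rules this out precisely because $\reg(M)-1<\reg(P)$. The hypothesis $p\neq m-1$ is what excludes the genuinely bad configuration: when $p=m-1$, a top-degree generator of $\Tor_i(M,\KK)$ can be annihilated by the image of $\Tor_{i+1}(P,\KK)$ in the same internal degree, so the contribution of $\reg(M)$ disappears from $N$ and the inequality can be strict. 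One could equally run this argument with local cohomology and the invariants $a_i(X)=\max\{j\mid H^i_{\mm}(X)_j\neq 0\}$; the three inequalities and the case analysis are identical.
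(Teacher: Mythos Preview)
Your proof is correct and is the standard argument via the long exact sequence in $\Tor$. Note, however, that the paper does not actually prove this lemma: it is quoted verbatim from \cite[Lemma~3.1]{HT} and used as a black box, so there is no ``paper's own proof'' to compare against. Your write-up supplies exactly the expected justification, and the case analysis for the equality clause is clean; the remarks about the index shifts and the genuinely exceptional case $p=m-1$ are accurate.
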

		
		\begin{Lemma}{\em (\cite[Theorem 7.1]{H})}\label{depth}
			Let $I\subseteq S$ be a monomial ideal. Then
			\[
			\depth(S/I) = \min\{ \depth(S/\sqrt{I:f})\mid f \text{ is a monomial such that } f \notin I\}.
			\]
		\end{Lemma}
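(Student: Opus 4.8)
The plan is to pass to local cohomology and compare minimal nonvanishing cohomological degrees. By Grothendieck's vanishing/non-vanishing theorem, $\depth(S/I) = \min\{i : \Coh{i}{S/I} \neq 0\}$, and likewise $\depth(S/\sqrt{I:f}) = \min\{i : \Coh{i}{S/\sqrt{I:f}} \neq 0\}$ for each monomial $f \notin I$. Since $I$ is a monomial ideal, all of these modules carry a $\ZZ^n$-grading, and I would compute their graded pieces by Takayama's formula on the left and Hochster's formula on the right. Concretely, I aim to show that a cohomological degree $i$ is attained by $\Coh{i}{S/I}$ if and only if it is attained by $\Coh{i}{S/\sqrt{I:f}}$ for some monomial $f \notin I$, with the \emph{same} index $i$ on both sides; taking minima over $i$, and over $f$ on the right, then yields the claimed identity.

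For the left-hand side I fix $\ab \in \ZZ^n$, set $G_\ab = \{j : a_j < 0\}$, and let $f = \xb^{\ab^+}$ be the monomial given by the nonnegative part of $\ab$. Takayama's formula expresses $\dim_\KK \Coh{i}{S/I}_\ab$ as $\dim_\KK \tilde{H}^{\,i - |G_\ab| - 1}(\Delta_\ab(I);\KK)$, where the degree complex $\Delta_\ab(I)$ consists of those $F \subseteq \{1,\dots,n\}\setminus G_\ab$ for which $\xb^\ab$ does not lie in the localization of $I$ obtained by inverting the variables in $F \cup G_\ab$. The central computation is to identify this degree complex: a short divisibility argument on the minimal generators $\xb^\bb$ of $I$ shows that $F \in \Delta_\ab(I)$ exactly when, for every generator, some coordinate $j \notin F \cup G_\ab$ satisfies $b_j > a_j^+$; on the other hand, testing when $(\prod_{j \in F \cup G_\ab} x_j)^N f$ fails to lie in $I$ for all $N$ produces the identical condition. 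Hence $\Delta_\ab(I) = \lk_{\Delta}(G_\ab)$, where $\Delta$ is the Stanley--Reisner complex of the squarefree ideal $\sqrt{I:f}$. Applying Hochster's formula to $S/\sqrt{I:f}$ at the squarefree degree $-\mathbf{1}_{G_\ab}$ returns the same reduced cohomology group $\tilde{H}^{\,i - |G_\ab| - 1}(\lk_\Delta(G_\ab);\KK)$, and, crucially, at the same cohomological index $i$. Thus $\Coh{i}{S/I}_\ab \neq 0$ if and only if $\Coh{i}{S/\sqrt{I:f}}_{-\mathbf{1}_{G_\ab}} \neq 0$.

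Running this equivalence over all $\ab$ (and correspondingly over all $f \notin I$ and all faces) shows that the set of cohomological degrees $i$ with $\Coh{i}{S/I} \neq 0$ coincides with $\bigcup_{f} \{i : \Coh{i}{S/\sqrt{I:f}} \neq 0\}$, and taking the minimum gives the lemma. The step I expect to be the main obstacle is the bookkeeping of the $\ZZ^n$-grading: confirming that every nonvanishing piece of $\Coh{i}{S/I}$ arises from an $\ab$ whose associated monomial $f = \xb^{\ab^+}$ genuinely lies outside $I$ (so that the degree complex is nonvoid), and conversely that every nonvanishing squarefree-degree piece of $\Coh{i}{S/\sqrt{I:f}}$ is realized by some $\ab$ in the range where Takayama's formula holds. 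Checking that the cohomological index $i$ and the homological shift $i - |G_\ab| - 1$ line up on the two sides, rather than being displaced by the size of the inverted-variable set, is precisely what forces the two minima to agree, and is the point deserving the most care.
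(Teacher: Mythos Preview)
The paper does not supply its own proof of this lemma; it is quoted verbatim as an external result (attributed to \cite[Theorem~7.1]{H}) and used as a black box in the proofs of Lemma~\ref{depth-decreasing} and Theorem~\ref{theorem-depth}. So there is no in-paper argument to compare against.

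That said, your outline is the standard route to this identity and is correct. The key computation---that for $\ab\in\ZZ^n$ with negative support $G_\ab$ and $f=\xb^{\ab^+}$ the Takayama degree complex $\Delta_\ab(I)$ coincides with $\lk_\Delta(G_\ab)$ for the Stanley--Reisner complex $\Delta$ of $\sqrt{I:f}$---is exactly right, and the cohomological shifts match as you say. For the converse direction you worry about (realising a nonvanishing graded piece of $\Coh{i}{S/\sqrt{I:f}}$ inside $\Coh{i}{S/I}$), note that you do not need $\xb^{\ab^+}=f$ on the nose: given $f\notin I$ and a face $G$ with $\tilde H^{\,i-|G|-1}(\lk_\Delta G;\KK)\ne 0$, set $a_j=-1$ for $j\in G$ and $a_j=\deg_{x_j}(f)$ for $j\notin G$. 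The condition defining $\lk_\Delta G$ only tests indices $j\notin G$, so it agrees with the condition defining $\Delta_\ab(I)$ regardless of whether $\supp(f)$ meets $G$, and nonemptiness of $\lk_\Delta G$ forces $\emptyset\in\Delta_\ab(I)$, hence $\xb^{\ab^+}\notin I$. With this observation the two minima agree and your argument closes.
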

\medskip

A simple graph $G=(V(G),E(G))$ is a graph  without loops or multiple edges, where $V(G)$ and  $E(G)$  are the sets of   vertices  and  edges of $G$, respectively.
 A graph $H$ is called an induced subgraph of $G$ if $V(H)\subset V(G)$, and  for any two vertices $u,v\in V(H)$, $uv\in E(H)$ if and only if $uv\in E(G)$. The induced subgraph of $G$ on a subset $W\subseteq V(G)$ is obtained from $G$  by  deleting  the vertices not in $W$ and their incident edges.
 We also denote the induced subgraph of $G$ on the set $V(G)\setminus W$ by $G\setminus W$, and if $W =\{v\}$, then  $G\setminus v$ stands for $G\setminus \{v\}$.
 
 Given a graph $G$ and a vertex  $v$, let $N_G(v)=\{u \in V(G) \mid uv \in E(G)\}$ be the neighborhood of $v$. The degree of a vertex $v$, denoted by $\deg_G(v)$, is the cardinality of its neighborhood, i.e. $\deg_G(v)=|N_G(v)|$. A vertex $v$ is a leaf if $\deg_G(v)=1$, meaning it has a unique neighbor. An edge containing a leaf is called a pendant edge. In this paper, we denote $L_G(v)$ as the set of leaves in $G$  that are adjacent to $v$.
 
A path in $G$ is a sequence of vertices $v_1,v_2,\ldots, v_k$ such that $v_iv_{i+1}\in E(G)$ for $i=1,\ldots,k-1$. If all  the  vertices of a path are distinct, then it is called a simple path. The length of a path is the number of edges in the  path. We write
$$v_1\to v_2\to\cdots \to v_k$$
to indicate a path from $v_1$ to $v_k$. 
A cycle of length $k$, where $k\geqslant 3$, is a path $v_1,v_2,\ldots, v_k,v_1$ where $v_1,\ldots,v_k$ are distinct. A tree is a connected simple graph without cycles. A graph $G$ is bipartite if $V(G)$ admits a partition into  two subsets, $X$ and $Y$, such that every edge has one vertex in $X$ and  one in $Y$. In this case, the pair  $(X,Y)$ is called a bipartition of $G$. If every  vertex in $X$ is adjacent to every vertex in $Y$, then $G$ is called a complete bipartite graph and is denoted by $K_{X,Y}$. If $X =\{x\}$, then $K_{X,Y}$ is called a star graph  with center $x$. It is well known  that a graph is bipartite if and only if it has no odd cycles. (See, for example, \cite[Proposition 1.6.1]{D}).  In particular, a tree is a bipartite graph.

Let $\omega\colon E(G)\to \ZZ_{>0}$ be a weight function on  $E(G)$ and let $H$ be a subgraph of $G$.  Then, we can restrict  the function $\omega$ to  $E(H)$ to obtain the weighted graph $H_\omega$. This means that the weight of an edge $uv$ in $H$ is simply $\omega(uv)$.

\begin{Lemma}{\em (\cite[Lemma 2.1]{LTZ2})}\label{colon}
	Let $I$ be a monomial ideal and let $x^py^q$ be a monomial in $\mathcal{G}(I)$, where $p \geqslant 1$ and $q\geqslant 1$, and $x$ and $y$ are variables. For any   $f$ in $\mathcal{G}(I)$, $f$ satisfies 
	\begin{enumerate}
		\item if  $f\ne x^py^q$, then $y\nmid f$,
		\item if  $x\mid f$, then $\deg_x(f)\geqslant p$.
	\end{enumerate}
	Then $(I^t \colon x^py^q)= I^{t-1}$ for all $t\geqslant 2$.
\end{Lemma}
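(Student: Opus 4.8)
The plan is to establish the two inclusions of $(I^t : x^p y^q) = I^{t-1}$ separately, the first being routine and the second carrying all the content. Since $I$ is a monomial ideal and $x^p y^q$ is a monomial, the colon $(I^t : x^p y^q)$ is again a monomial ideal, so it suffices to compare the two sides on monomials. For the inclusion $I^{t-1} \subseteq (I^t : x^p y^q)$ I would simply note that $x^p y^q \in \mathcal{G}(I) \subseteq I$, whence $x^p y^q \cdot I^{t-1} \subseteq I \cdot I^{t-1} = I^t$; this uses neither hypothesis (1) nor (2).

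The substance lies in the reverse inclusion $(I^t : x^p y^q) \subseteq I^{t-1}$. I would fix a monomial $g \in (I^t : x^p y^q)$ and set $u := g\,x^p y^q$, so that $u \in I^t$. By the standard description of powers of a monomial ideal, there exist generators $f_1, \dots, f_t \in \mathcal{G}(I)$ (repetitions allowed) with $f_1 \cdots f_t \mid u$, and the goal reduces to exhibiting $t-1$ generators whose product divides $g$. I would split according to whether the distinguished generator appears. If some $f_i = x^p y^q$, then dividing $f_1\cdots f_t \mid u$ through by $f_i$ gives $\prod_{j\ne i} f_j \mid u/(x^p y^q) = g$, so $g \in I^{t-1}$ at once.

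The interesting case is when no $f_i$ equals $x^p y^q$. Here condition (1) is decisive: since $x^p y^q$ is the \emph{only} generator divisible by $y$, every $f_i$ is then free of the variable $y$. The idea is to delete one carefully chosen factor and check that the remaining product still divides $g$; I would delete an $x$-divisible factor $f_i$ if one exists, and otherwise delete an arbitrary factor. The key computation, and the main obstacle since it is precisely where condition (2) enters, is the degree bookkeeping showing $\prod_{j \ne i} f_j \mid g$ rather than merely $\mid u$. For a variable $z \notin \{x,y\}$ this is automatic because $\deg_z(u) = \deg_z(g)$; for $y$ it is automatic because the surviving product is $y$-free; and for $x$ one invokes condition (2), namely $\deg_x(f_i) \geq p$ for the deleted $x$-divisible factor, to absorb exactly the surplus $x^p$:
\[
\deg_x\Bigl(\prod_{j\ne i} f_j\Bigr) \;\le\; \deg_x(u) - \deg_x(f_i) \;\le\; \bigl(\deg_x(g) + p\bigr) - p \;=\; \deg_x(g).
\]
When no $f_i$ is divisible by $x$, the surviving product is $x$-free, so the factor $x^p$ in $u$ is irrelevant and the same conclusion holds trivially. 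This yields $\prod_{j\ne i} f_j \mid g$, hence $g \in I^{t-1}$, completing the reverse inclusion.

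I expect the only delicate point to be organizing the variable-by-variable comparison cleanly and confirming that hypotheses (1) and (2) are exactly what govern the $y$- and $x$-coordinates, respectively. Notably, no induction on $t$ is required, and the assumption $t \geq 2$ is used only to ensure that $I^{t-1}$ is a genuine power of $I$.
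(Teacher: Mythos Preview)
Your argument is correct. The paper does not supply its own proof of this lemma; it is quoted verbatim from \cite[Lemma 2.1]{LTZ2}, so there is nothing here to compare against beyond confirming that your reasoning is sound --- and it is: the forward inclusion is immediate, and for the reverse you correctly reduce to a monomial $g$, factor $g x^p y^q$ through $t$ generators, and handle the two cases (some $f_i = x^p y^q$, or none) with the variable-by-variable degree comparison using hypotheses (1) and (2) exactly where needed.
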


Recall that $G_\omega$ is an increasing weighted tree if $G$  is a tree and if there exists a vertex $r$ such that the weight function on every simple path from a leaf to $r$ is increasing. In other words,
if
$$v_1\to v_2\to v_3\to \cdots \to v_k =r$$
is a  simple path from a leaf $v_1$ to $r$ of length at least $2$, then $\omega(v_iv_{i+1}) \leqslant \omega(v_{i+1}v_{i+2})$ for $i=1,\ldots, k-2$. In this case, $r$ is called a root of $G_\omega$,
and $(G_\omega,r)$ is an increasing weighted tree. Obviously, $G_\omega$ is an increasing weighted tree if $G$ is a star graph.

\begin{Lemma}{\em (\cite[Lemma 1.5]{LTZ2})}\label{path}
 Assume that $(G_\omega,r)$ is an increasing weighted tree. If $G$ is not a star  graph centered at $r$, then there is a longest path 
$$r= v_0\to v_1\to\cdots\to v_k$$
in $G$ from  $r$ such that
\begin{enumerate}
    \item $v_k$ is a leaf;
    \item if $u\in N_G(v_{k-2})$ is a non-leaf, then $\omega(v_{k-1}v_{k-2})\leqslant \omega(v_{k-2}u)$;
    \item $N_G(v_{k-1})$ has only one non-leaf $v_{k-2}$;
    \item $\omega(v_{k-1}u) \leqslant \omega(v_{k-1}v_{k-2})$ for all $u\in N_G(v_{k-1})$;
    \item $\omega(v_{k-1}v_k) \leqslant \omega(v_{k-1}u)$ for all $u\in N_G(v_{k-1})$.
\end{enumerate}
\end{Lemma}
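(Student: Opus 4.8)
The plan is to work inside the rooted tree $(G,r)$ and to produce the required path by making two carefully ordered greedy choices near its top, so that each of the five assertions falls out either from the monotonicity built into the hypothesis or from the minimality of a chosen edge. Root $G$ at $r$, let the depth of a vertex be its distance to $r$, write $p(w)$ for the parent of a vertex $w\neq r$, and call every neighbour of $w$ other than $p(w)$ a child of $w$. The one consequence of the increasing hypothesis that drives everything is this: if $w\neq r$ has a child $c$, take any leaf $\ell$ in the subtree below $c$; the simple path $\ell\to\cdots\to c\to w\to p(w)\to\cdots\to r$ runs from a leaf to $r$ and traverses the edges $cw$ and $w\,p(w)$ consecutively, so the increasing condition gives
$$\omega(cw)\leq\omega(w\,p(w)). \qquad (\ast)$$
In words, at every non-root vertex the parent-edge weighs at least as much as any child-edge. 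Let $k$ be the length of a longest path from $r$, i.e.\ the height of the rooted tree; since $G$ is not a star centred at $r$, some vertex lies at distance $\geq 2$ from $r$, so $k\geq 2$.

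Next I would build the path. Choose a vertex $v_{k-2}$ at depth $k-2$ possessing a non-leaf child (one exists, as a leaf at the maximal depth $k$ has an ancestor of exactly this kind), and let $v_0=r,\dots,v_{k-2}$ be the unique tree-path from $r$ to it. Among the non-leaf children of $v_{k-2}$ pick $v_{k-1}$ with $\omega(v_{k-2}v_{k-1})$ minimal, and among the children of $v_{k-1}$ pick $v_k$ with $\omega(v_{k-1}v_k)$ minimal. Since $v_{k-1}$ has depth $k-1$, all of its children lie at the maximal depth $k$ and are therefore leaves; in particular $v_k$ is a leaf, which is assertion (1), and $r=v_0\to\cdots\to v_k$ has length $k$ and so is a longest path from $r$. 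Assertion (3) is then structural: the neighbours of $v_{k-1}$ are its children (all leaves) together with its parent $v_{k-2}$, and $v_{k-2}$ is a non-leaf because it has the child $v_{k-1}$ and, when $k\geq3$, a parent as well.

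The three weight inequalities are verified by matching each comparison to its source. For (4), every child-edge of $v_{k-1}$ is dominated by the parent-edge $v_{k-1}v_{k-2}$ via $(\ast)$ with $w=v_{k-1}$, while the case $u=v_{k-2}$ is an equality. For (5), the comparison with any leaf-child is exactly the minimality in the choice of $v_k$, and the comparison with $u=v_{k-2}$ is $(\ast)$ applied to $w=v_{k-1}$, $c=v_k$. For (2), a non-leaf neighbour of $v_{k-2}$ is either its parent, which exists only when $k\geq3$ and is handled by $(\ast)$ with $w=v_{k-2}$, $c=v_{k-1}$, giving $\omega(v_{k-2}v_{k-1})\leq\omega(v_{k-2}\,p(v_{k-2}))$, or else a non-leaf child, for which the minimality in the choice of $v_{k-1}$ yields $\omega(v_{k-2}v_{k-1})\leq\omega(v_{k-2}u)$. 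Thus every required inequality reduces either to $(\ast)$ (for comparisons in the parent direction) or to one of the two minimality choices (for comparisons among siblings), which is the conceptual core of the argument.

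The main obstacle is organisational rather than technical: one must make the two greedy choices in the correct order and keep straight which conclusions are forced by the monotonicity $(\ast)$ and which genuinely require the minimal edge. The one delicate point is the degenerate case $k=2$ with $v_{k-2}=r$ of degree one; then $r$ is itself a leaf and $G$ is a star centred at the unique neighbour of $r$, so assertion (3) must be read with care. This case is either excluded by taking $r$ to be a non-leaf root, or disposed of directly, since such a $G$ is a star and is treated separately.
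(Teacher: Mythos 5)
The paper itself contains no proof of this lemma: it is imported verbatim from \cite[Lemma 1.5]{LTZ2}, so there is no internal argument to compare against, and your proposal has to stand as a self-contained reconstruction. As such it is essentially correct. Your monotonicity observation $(\ast)$ is a legitimate consequence of the definition (for a child $c$ of $w\neq r$, a leaf $\ell$ with no children in the subtree below $c$ has degree $1$, and the simple path $\ell\to\cdots\to c\to w\to p(w)\to\cdots\to r$ has length at least $2$, so the increasing condition applies to the consecutive edges $cw$ and $wp(w)$), and the two greedy choices — a minimal-weight edge to a non-leaf child of $v_{k-2}$, then a minimal-weight pendant edge at $v_{k-1}$ — do deliver the five assertions exactly as you match them: (1) and (3) because all children of a depth-$(k-1)$ vertex sit at maximal depth, (4) and the parent-direction halves of (2) and (5) from $(\ast)$, and the sibling-direction halves of (2) and (5) from the two minimality choices. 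The existence of $v_{k-2}$ (as the grandparent of a depth-$k$ vertex) and the fact that $k\geqslant 2$ are also correctly justified.

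The one place where your write-up is looser than it should be is the degenerate case you flag, and your proposed escape does not quite work as stated. Under this paper's convention a leaf is any vertex of degree $1$, so if $k=2$ and $\deg_G(r)=1$ — that is, $G$ is a star centered at the unique neighbor of $r$, a configuration \emph{not} excluded by the hypothesis that $G$ is not a star centered at $r$ — then conclusion (3) fails as literally stated: $N_G(v_1)$ contains no non-leaf at all, because $v_0=r$ is itself a leaf. You cannot "exclude this by taking $r$ to be a non-leaf root," since the lemma fixes the root $r$ in its hypothesis. The accurate account is that your construction proves (1), (2), (4), (5) in all cases, and proves (3) whenever $k\geqslant 3$ or $\deg_G(r)\geqslant 2$; the remaining corner case must be read with the rooted-tree convention that $r$ does not count as a leaf (so that (3) holds vacuously with $v_{k-2}=r$ the designated non-leaf), or one observes that in every application in this paper (Lemmas \ref{depth-decreasing}, \ref{L2}, \ref{LCM} and Theorem \ref{reg-power}) either the star case with arbitrary center is disposed of before the lemma is invoked, or only conclusions (1) and (5) are used, which your argument establishes unconditionally. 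With that proviso made explicit, your proof is complete.
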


\begin{Lemma}\label{depth-decreasing}
 If $G_\omega$ is an increasing weighted tree, then $$\depth(S/I(G_\omega)^{t+1})\leqslant \depth(S/I(G_\omega)^{t}) \ \text{ for all } t\geqslant 1.$$
\end{Lemma}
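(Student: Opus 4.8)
The plan is to single out one well-chosen pendant edge, reduce the question to a colon identity, and then compare the depth formula of Lemma~\ref{depth} for $I^{t+1}$ and for $I^t$ via a monomial substitution. Throughout write $I:=I(G_\omega)$ and fix a root $r$.

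First I would produce a generator $f\in\mathcal{G}(I)$ to which Lemma~\ref{colon} applies. If $G$ is a star centered at $r$, I take $f$ to be the generator of a pendant edge of minimal weight among all edges of $G$. Otherwise I apply Lemma~\ref{path} to get a longest path $r=v_0\to\cdots\to v_k$, and set $y=x_k$ (the leaf $v_k$), $x=x_{k-1}$, $a=\omega(v_{k-1}v_k)$ and $f=(x_{k-1}x_k)^a$. In both cases $f=x^ay^a\in\mathcal{G}(I)$ satisfies the two hypotheses of Lemma~\ref{colon}: condition (1) holds because $v_k$ is a leaf, so $x_k=y$ divides no generator of $I$ other than $f$; condition (2) holds because, by part (5) of Lemma~\ref{path} (respectively by minimality of the weight in the star case), $a=\omega(v_{k-1}v_k)\le\omega(v_{k-1}u)$ for every $u\in N_G(v_{k-1})$, so every generator divisible by $x=x_{k-1}$ has $x_{k-1}$-degree at least $a$. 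Lemma~\ref{colon} then yields
\[
(I^{t+1}:f)=I^{t}\qquad\text{for all }t\ge 1 .
\]

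The heart of the argument combines this colon equality with Lemma~\ref{depth}. For an arbitrary monomial $h\notin I^{t}$ I would note two facts. First, $hf\notin I^{t+1}$: otherwise $h\in(I^{t+1}:f)=I^{t}$, contradicting the choice of $h$. Second, using $(K:ab)=((K:a):b)$,
\[
(I^{t+1}:hf)=\bigl((I^{t+1}:f):h\bigr)=(I^{t}:h),
\]
whence $\sqrt{I^{t+1}:hf}=\sqrt{I^{t}:h}$ and so $\depth(S/\sqrt{I^{t+1}:hf})=\depth(S/\sqrt{I^{t}:h})$. Thus the map $h\mapsto hf$ sends monomials outside $I^{t}$ to monomials outside $I^{t+1}$ while preserving the relevant depth value.

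Finally I would take minima. Since in Lemma~\ref{depth} the minimum for $I^{t+1}$ ranges over \emph{all} monomials outside $I^{t+1}$, a set that contains every $hf$ with $h\notin I^{t}$, restricting to this smaller index set can only increase the minimum:
\[
\depth(S/I^{t+1})=\min_{g\notin I^{t+1}}\depth(S/\sqrt{I^{t+1}:g})\le\min_{h\notin I^{t}}\depth(S/\sqrt{I^{t+1}:hf})=\min_{h\notin I^{t}}\depth(S/\sqrt{I^{t}:h})=\depth(S/I^{t}),
\]
which is exactly the claim. The only delicate point is the opening step: verifying that the leaf condition and Lemma~\ref{path}(5) are precisely what is needed to check the two hypotheses of Lemma~\ref{colon}, and separating off the star case, where Lemma~\ref{path} is not available. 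Once $(I^{t+1}:f)=I^{t}$ is established, the comparison of minima is purely formal.
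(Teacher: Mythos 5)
Your proof is correct and follows essentially the same route as the paper: select the pendant edge supplied by Lemma~\ref{path}, apply Lemma~\ref{colon} to get $(I^{t+1}\colon f)=I^t$, and conclude via the depth formula of Lemma~\ref{depth}. The only (welcome) differences are cosmetic: you treat the star case uniformly with a minimal-weight pendant edge where the paper instead cites known results of \cite{ZDCL}, and you spell out the comparison of minima over $\{hf \mid h\notin I^t\}\subseteq\{g \mid g\notin I^{t+1}\}$ together with $\sqrt{I^{t+1}\colon hf}=\sqrt{I^t\colon h}$, a detail the paper compresses into one line even though Lemma~\ref{depth} as stated involves radicals.
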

\begin{proof} Let $r$ be the root of $G_\omega$ and let $I=I(G_\omega)$. If $G$ is a star graph  centered at $r$, then $\depth(S/I^t)=1$ for all $t\geqslant 1$, according to  \cite[Theorems 3.1 and 3.3]{ZDCL}. If $G$ is not a star graph centered at $r$, then by Lemma \ref{path}, there is a pendant  edge $xy$ of $G$ such that $\deg_G(y)=1$ and $\omega(xy)\leqslant \omega(xv)$ for every $v\in N_G(x)$. By Lemma \ref{colon}, $(I^{t+1}\colon (xy)^{\omega(xy)}) = I^{t}$. Together with Lemma \ref{depth}, this yields
$$\depth(S/I^{t+1}) \leqslant \depth(S/(I^{t+1}\colon (xy)^{\omega(xy)}) )=\depth(S/I^t),$$
as required.
\end{proof}

\begin{Lemma}{\em (\cite[Lemma 2.2]{LTZ2})} \label{max-ideal} 
If $G_\omega$ is an increasing weighted tree, then $\mm\notin \operatorname{Ass}(I(G_\omega)^t)$ for all $t \geq 1$.
\end{Lemma}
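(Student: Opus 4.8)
The plan is to reduce the statement to positivity of depth and then to a purely combinatorial fact about edge‑packings on the tree $G$. Since $\mm \in \Ass(S/I(G_\omega)^t)$ if and only if $\depth(S/I(G_\omega)^t)=0$, and, by Lemma~\ref{depth}, $\depth(S/I(G_\omega)^t)=\min\{\depth(S/\sqrt{I(G_\omega)^t:f}) : f \text{ monomial},\ f \notin I(G_\omega)^t\}$, it suffices to prove: for every monomial $f\notin I(G_\omega)^t$ there is a variable $x$ with $x\notin \sqrt{I(G_\omega)^t:f}$. Indeed, $\sqrt{I(G_\omega)^t:f}$ is a radical monomial ideal, and such an ideal has depth $0$ precisely when it equals $\mm$; so producing one variable outside it forces $\depth(S/\sqrt{I(G_\omega)^t:f})\ge 1$, hence $\depth(S/I(G_\omega)^t)\ge 1$ and $\mm\notin\Ass(S/I(G_\omega)^t)$.

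Next I would encode membership in $I^t$, where $I:=I(G_\omega)$, combinatorially. Writing each generator as $(ww')^{\omega(ww')}$, a monomial $m$ lies in $I^t$ exactly when there is an assignment $\lambda\colon E(G)\to\ZZ_{\ge 0}$ with $\sum_{e}\lambda_e\ge t$ and $\sum_{e\ni w}\omega(e)\lambda_e\le \deg_w(m)$ for every vertex $w$; that is, $m\in I^t$ iff the maximal edge‑packing number $P(m):=\max\{\sum_e\lambda_e\}$ under these capacity constraints is at least $t$. Letting $P_w(f)$ denote the same maximum after deleting the capacity constraint at the single vertex $w$, one checks that $x_w^N f\in I^t$ for some $N$ iff $P_w(f)\ge t$, since large powers of $x_w$ make the constraint at $w$ vacuous. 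Hence $x_w\notin\sqrt{I^t:f}$ iff $P_w(f)\le t-1$, and the goal becomes: if $P(f)\le t-1$ then $P_w(f)\le t-1$ for some vertex $w$; equivalently, the maximal packing under $f$ always has a redundant vertex capacity, i.e. $\min_w P_w(f)=P(f)$.

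I would prove this last statement by induction on $|V(G)|$, deleting the leaf $v=v_k$ supplied by Lemma~\ref{path}, with parent $u=v_{k-1}$ and $a=\omega(uv)=\min_{w\in N_G(u)}\omega(uw)$ from part (5) of that lemma. The easy case is $\deg_v(f)\ge\deg_u(f)$: the capacity at $u$ already forces $a\lambda_{uv}\le\deg_u(f)\le\deg_v(f)$, so the capacity at $v$ is implied by that at $u$ and is therefore redundant, giving $P_v(f)=P(f)\le t-1$. In the remaining case $\deg_v(f)<\deg_u(f)$ I would pass to $G'=G\setminus v$, which is again an increasing weighted tree with the same root, and transfer the packing problem for $(G,f)$ to one for $(G',f')$ with $f'=f/v^{\deg_v(f)}$, so that the inductive hypothesis yields a redundant vertex there. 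The fact that $a$ is the minimal weight at $u$, which is exactly the hypothesis of Lemma~\ref{colon} ensuring $(I^t:(uv)^a)=I^{t-1}$, is what guarantees that this transfer can be carried out with integer packings and without losing the bound $\ge t$.

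The main obstacle is this deletion step. It is the analogue, in the weighted setting, of the fact that the edge ideal of a bipartite graph is normally torsion‑free, so that $\mm$ never becomes associated to a power; the role of ``no odd cycles'' is played here by the tree structure, and the genuinely new difficulty is integrality of the packings once the weights $\omega(e)$ are allowed to differ. Controlling this — showing that relaxing the capacity at the leaf $v$ does not create extra packing room at $u$ beyond what the minimality condition $a=\min_{w}\omega(uw)$ permits — is where the increasing hypothesis and Lemma~\ref{path} are essential, and is the step I expect to require the most care.
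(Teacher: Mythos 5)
First, a point of comparison: the paper contains no internal proof of this lemma to measure your attempt against --- it is imported verbatim from \cite[Lemma 2.2]{LTZ2}, whose argument (judging from the companion results the paper quotes elsewhere, e.g.\ \cite[Theorem 2.10]{LTZ2} on strong vertex covers) runs through the associated-prime machinery of that paper rather than through depth and packings. Your reduction is sound as far as it goes: $\mm\in\Ass(S/I^t)$ exactly when $\depth(S/I^t)=0$; Lemma \ref{depth} applied to $I^t$; the observation that a radical monomial ideal has depth zero only if it equals $\mm$; the packing reformulation of membership $m\in I^t$ (an assignment $\lambda\colon E(G)\to\ZZ_{\geq 0}$ with $\sum_e\lambda_e\geq t$ and $\sum_{e\ni w}\omega(e)\lambda_e\leq \deg_w(m)$ at every $w$); and the equivalence that $x_w\in\sqrt{I^t:f}$ precisely when $P_w(f)\geq t$ are all correct, as is your Case 1: when $\deg_v(f)\geq\deg_u(f)$, the leaf constraint $a\lambda_{uv}\leq\deg_v(f)$ is implied by the constraint at $u$, so $P_v(f)=P(f)$.

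The genuine gap is Case 2, which carries the entire content of the lemma and which you only gesture at. The inductive hypothesis on $G'=G\setminus v$ concerns packings of $G'$, but the quantity you must control, $P^G_w(f)$, is an optimum over packings of $G$ that may load the deleted edge $uv$; nothing in your write-up explains how $\lambda_{uv}$ is accounted for in the passage to $(G',f')$, and with $f'=f/v^{\deg_v(f)}$ alone (no adjustment of the capacity at $u$) the identity you would need is simply false, since each unit of $\lambda_{uv}$ consumes capacity $a$ at $u$. What closes the gap is an exchange argument you never state: because $v$ is a leaf and $a=\omega(uv)=\min\{\omega(uw)\mid w\in N_G(u)\}$ (Lemma \ref{path}(1),(5)), any feasible packing can be modified, without decreasing $\sum_e\lambda_e$, so that $\lambda_{uv}=m:=\lfloor\deg_v(f)/a\rfloor$ --- swap one unit from any other edge at $u$ into $uv$ (feasible at $u$ since $a\leq\omega(uw)$, and at $v$ since $a(\lambda_{uv}+1)\leq am\leq\deg_v(f)$), and if no other edge at $u$ is loaded, raise $\lambda_{uv}$ directly, which is exactly where the Case 2 hypothesis $am\leq\deg_v(f)<\deg_u(f)$ enters; the same normalization works verbatim for every relaxed problem $P_w$ with $w\neq v$. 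Granting it, one gets $P^G_w(f)=m+P^{G'}_w(g)$ for all $w\in V(G')$, where $g=f/\bigl(v^{\deg_v(f)}u^{am}\bigr)$ (note $\deg_u(g)\geq 0$ precisely by the Case 2 hypothesis), $G'$ is again an increasing weighted tree with the same root, and the induction closes, giving $\min_w P_w(f)=P(f)$ for every monomial $f$ and hence the lemma, since $t=P(f)+1$ is arbitrary. So your strategy is completable, and once completed it is a genuinely different, self-contained route from the one the paper cites; but as submitted, the decisive step --- the integrality and transfer of packings across the deleted pendant edge --- is asserted rather than proved, and you say so yourself.
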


\begin{Definition} Let $(G_\omega,r)$ be an increasing weighted tree. 
An edge $uv$ of $G$ is special if there is a simple path in $G$ to $r$ in the form
$$v_1\to u=v_2\to v=v_3\to\cdots \to v_k=r,$$
 where $k\geqslant 3$ and $\omega(uv_1)=\omega(vu)$. Let  $S(r,G_\omega)$ be the set of special edges.
\end{Definition}

\begin{Lemma}\label{sG-Equality} If $(G_\omega,r)$ be an increasing weighted tree, then $s(r,G_\omega) = |S(r,G_\omega)|$.
\end{Lemma}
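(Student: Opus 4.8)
The plan is to exhibit an explicit bijection between the set of special vertices (as defined in the introduction) and the set $S(r,G_\omega)$ of special edges, exploiting the rooted-tree structure of $(G_\omega,r)$. The point is that both notions encode the same local datum, namely an ``equality at the bottom'' of a path, attached once to a vertex and once to the edge leaving that vertex toward $r$.

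First I would record the structural fact that makes everything work: since $G$ is a tree and $r$ is fixed, every vertex $v\neq r$ has a unique \emph{parent} $p(v)$, the unique neighbor of $v$ lying on the unique simple path from $v$ to $r$. Consequently, for each non-root vertex $v$ the edge $v\,p(v)$ is well defined, and every edge of $G$ carries a canonical orientation in which one endpoint is the parent of the other; the two endpoints of an edge have distinct distances to $r$, so this orientation, and in particular the ``child'' endpoint, is unambiguous.

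Next I would unwind the two definitions and verify they correspond under $v\mapsto v\,p(v)$. By the definition of a special vertex, $v$ is special exactly when there is a simple path $v_1\to v\to v_3\to\cdots\to v_k=r$ with $v_1$ a leaf, $k\geqslant 3$, and $\omega(v_1v)=\omega(vv_3)$. Because the path from $v$ to $r$ is unique in a tree, necessarily $v_3=p(v)$, so this is equivalent to: $v$ has a leaf neighbor $v_1\in N_G(v)$ with $\omega(v_1v)=\omega(v\,p(v))$. Comparing with the definition of a special edge, in which $uv$ is special iff there is a simple path $v_1\to u\to v\to\cdots\to r$ with $v_1$ a leaf and $\omega(uv_1)=\omega(vu)$, the same uniqueness argument forces $v=p(u)$, and the defining equality reads $\omega(v_1u)=\omega(u\,p(u))$. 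Hence $v$ is a special vertex if and only if the edge $v\,p(v)$ is a special edge.

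Finally I would set $\Phi(v)=v\,p(v)$ on the set of special vertices. The previous step shows $\Phi$ takes values in $S(r,G_\omega)$; it is injective because the child endpoint of an oriented edge is recovered uniquely, so $v\,p(v)=v'\,p(v')$ forces $v=v'$; and it is surjective because any special edge, written $uv$ with $v=p(u)$ as in the definition, equals $\Phi(u)$ for the special vertex $u$. Therefore $\Phi$ is a bijection and $s(r,G_\omega)=|S(r,G_\omega)|$. I do not anticipate a genuine obstacle here, since the content is purely definitional; the only point demanding care is the matching of the two ``there exists a path'' quantifiers, which collapse to the same local condition precisely because the parent in a rooted tree is unique, together with keeping the child/parent orientation of edges consistent so that $\Phi$ and its inverse are both well defined.
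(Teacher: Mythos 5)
Your argument is, in substance, the paper's own proof: the paper likewise fixes the child--parent orientation (its directed edges $(v_1,v_2)$) and asserts that the edge $v_1v_2$ with direction $(v_1,v_2)$ is special if and only if $v_1$ is a special vertex, which is precisely your bijection $\Phi(v)=v\,p(v)$; you merely spell out the injectivity and surjectivity that the paper compresses into ``it is easy to see,'' and your use of the uniqueness of the parent in a rooted tree is the same mechanism.

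One point deserves a concrete flag, because it is where your write-up deviates from the printed text: when you restate the definition of a special edge you insert the hypothesis that $v_1$ is a leaf, but the paper's Definition of $S(r,G_\omega)$ contains no such requirement --- it asks only for a simple path $v_1\to u\to v\to\cdots\to r$ with $\omega(uv_1)=\omega(vu)$, and indeed Lemmas \ref{vInside} and \ref{uvOuside} later certify edges as special using interior, non-leaf witnesses $v_{2i-1}$. Under the definitions as literally printed, the surjectivity half of your correspondence is the step that fails: a special edge $u\,p(u)$ witnessed by a non-leaf neighbor need not make $u$ a special vertex, since the special-vertex definition demands a \emph{leaf} neighbor realizing the equality. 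For instance, take the path $z\to w\to u\to p\to r$ rooted at $r$ with $\omega(zw)=1$, $\omega(wu)=\omega(up)=2$, $\omega(pr)=3$: this is an increasing weighted tree in which $up$ is a special edge (witness $w$), yet no vertex is special, so $s(r,G_\omega)=0<1=|S(r,G_\omega)|$. The lemma therefore holds only after the two definitions are harmonized (either both notions require the witnessing neighbor to be a leaf, or neither does --- either choice makes your bijection work), and your proof performs this harmonization silently by rewriting the edge definition. To be fair, the paper's one-line ``it is easy to see'' glosses exactly the same point, so relative to the paper's proof this is not a loss of rigor; but you should be aware that the definition you matched against is not the one the paper states, and the equivalence you call ``purely definitional'' is genuinely false without the adjustment.
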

\begin{proof} If $G$ is a star graph with center $r$, then  $s(r,G_\omega)=|S(r,G_\omega)|=0$. Now, we assume that $G$ is not a star with center $r$. Let
$ v_0\to v_1\to v_2\to\cdots\to v_{k-1}\to v_k=r$ be any simple path  in $G$ to $r$ with $k\geqslant 2$. For each edge $v_iv_{i+1}$ of $G$,  we denote $(v_i,v_{i+1})$ to represent a directed edge with $v_i$ as the starting vertex and $v_{i+1}$ as the ending vertex. It is easy to see that the edge  $v_1v_2$ in $G$ with direction $(v_1,v_2)$ is a special edge of $(G_\omega,r)$ if and only if $v_1$ is a special vertex of $(G_\omega,r)$. It follows that $s(r,G_\omega) = |S(r,G_\omega)|$.
\end{proof}

\section{Depth of powers of the edge ideal of weighted trees} 
\label{sec:depth}

In this section, we study the asymptotic behavior of the depth function of the edge ideal of an increasing weighted tree. We always assume that $G$ is a tree with bipartition $(U,V)$ and $V(G)=\{x_1,\ldots,x_n\}$; and that $(G_\omega,r)$ is an increasing weighted tree. Let $K_{U,V}$ be the  complete bipartite graph with  bipartition $(U,V)$. For a positive integer $k$, the notation $[k]$ denotes the set $\{1,2,\dots,k\}$.

For any integer vector $\mathbf{a}= (a_1, \ldots, a_n) \in \mathbb{N}^n$, define the monomial $x^{\mathbf{a}} = \prod\limits_{i=1}^{n} x_i^{a_i}$ and write $\deg(x^{\mathbf{a}})=\sum\limits_{i=1}^{n}a_i$ and $\deg_{x_i}(x^{\mathbf{a}}) = a_i$ for each $i \in [n]$.

\begin{Definition} \label{notation}
For each $v\in V(G)$, define 
\[
\mu(v) = \max\{\omega(uv)\mid u\in N_G(v)\}.
\]
Let
$$A(G_\omega) = \left\{ v \in V(G) \;\middle|\;  
       	  \begin{aligned} 
       	  	&\text{there a simple path } v=v_1\to v_2\to \cdots\to v_{2m} \\ 
       	  	&\text{with } m\geqslant 1 \text { and }\omega(v_{2m-1}v_{2m})<\mu(v_{2m})
       	  \end{aligned}\right\},          
$$
and
$$f(r,G_\omega)=\left(\prod_{xy\in S(r,G_\omega)}(xy)^{\omega(xy)}\right)\left(\prod_{v\in V(G)}v^{\mu(v)-1}\right).$$
\end{Definition}

\begin{Lemma} \label{vInside} For any $v\in A(G_\omega)$, let  $v=v_1\to v_2\to \cdots \to v_{2m-1}\to v_{2m}$ 
be the  shortest path in $G$ such that  $\omega(v_{2m-1}v_{2m}) < \mu({v_{2m}})$. Then,  for all $i\in [m-1]$,  $v_{2i}v_{2i+1} \in S(r,G_\omega)$.
\end{Lemma}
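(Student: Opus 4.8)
The plan is to read off, from the minimality (``shortest'') of the chosen path, the weight equalities $\omega(v_{2j-1}v_{2j})=\mu(v_{2j})$ for every $j\in[m-1]$, then to pin down the orientation of the path relative to the root $r$, and finally to recognize each $v_{2i}v_{2i+1}$ as a special edge. Before anything else I would record one structural fact about increasing weighted trees: if $w$ is a non-root vertex and $p$ is its parent (the neighbor of $w$ on the unique path from $w$ to $r$), then $\mu(w)=\omega(wp)$. Indeed, for any child $c$ of $w$ choose a leaf $\ell$ in the subtree below $c$; applying the increasing condition to the two consecutive edges $cw$ and $wp$ on the leaf-to-root path through $\ell$ gives $\omega(cw)\leqslant\omega(wp)$, so the parent edge realizes the maximum $\mu(w)$.

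For the minimality step, fix $j\in[m-1]$ and consider the truncation $v=v_1\to\cdots\to v_{2j}$, which is a strictly shorter path of exactly the form required in the definition of $A(G_\omega)$. If $\omega(v_{2j-1}v_{2j})<\mu(v_{2j})$ held, this truncation would already witness $v\in A(G_\omega)$, contradicting that our path is shortest. Hence $\omega(v_{2j-1}v_{2j})\geqslant\mu(v_{2j})$, and since $\mu(v_{2j})$ is the maximal edge weight at $v_{2j}$ this forces $\omega(v_{2j-1}v_{2j})=\mu(v_{2j})$ for all $j\in[m-1]$.

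The crux is the orientation of the path, and this is the step I expect to be the main obstacle. From the defining inequality $\omega(v_{2m-1}v_{2m})<\mu(v_{2m})$ together with the structural fact, $v_{2m-1}$ cannot be the parent of $v_{2m}$ (otherwise $\omega(v_{2m-1}v_{2m})$ would equal $\mu(v_{2m})$); hence $v_{2m}$ is the parent of $v_{2m-1}$, so the final step moves toward $r$. Now I invoke that a simple path in a tree is unimodal in depth: it ascends toward $r$ until its vertex closest to $r$ and then descends, so all of its toward-root steps precede all of its away-from-root steps. Since the last step ascends, there is no descending step at all, and therefore $v_{i+1}$ is the parent of $v_i$ for every $i$; the whole path moves monotonically toward the root. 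Everything else is bookkeeping, but getting this orientation cleanly is the heart of the argument.

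Finally, fix $i\in[m-1]$. By the orientation, $v_{2i+1}$ is the parent of $v_{2i}$, so $\omega(v_{2i}v_{2i+1})=\mu(v_{2i})$ by the structural fact, while $v_{2i-1}$ is a child of $v_{2i}$ with $\omega(v_{2i-1}v_{2i})=\mu(v_{2i})$ by the minimality equalities. Thus $\omega(v_{2i}v_{2i-1})=\omega(v_{2i+1}v_{2i})$, and the simple path $v_{2i-1}\to v_{2i}\to v_{2i+1}\to\cdots\to r$ exhibits $v_{2i}v_{2i+1}$ as a special edge of $(G_\omega,r)$; that is, $v_{2i}v_{2i+1}\in S(r,G_\omega)$, as required.
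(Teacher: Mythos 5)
Your proof is correct, and while it shares the paper's skeleton --- minimality of the path forces $\omega(v_{2j-1}v_{2j})=\mu(v_{2j})$ for all $j\in[m-1]$, after which one orients the path toward $r$ and reads off the special edges --- you implement the crucial orientation step by a genuinely different mechanism. The paper picks a heaviest neighbor $u$ of $v_{2m}$ (so that $\omega(v_{2m-1}v_{2m})<\omega(v_{2m}u)$) and splits into two cases according to whether $r$ lies in $\{v_1,\ldots,v_{2m}\}$; in each case it concatenates a simple path through $u$ and the $v_i$'s toward $r$ and derives a contradiction from the increasing property, concluding that the route to $r$ exits the path exactly at $v_{2m}$, so that $v_1\to\cdots\to v_{2m}\to\cdots\to r$ is a simple increasing path and the equalities $\omega(v_{2i-1}v_{2i})=\omega(v_{2i}v_{2i+1})$ follow. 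You instead root the tree at $r$, prove that the parent edge of any non-root vertex realizes $\mu$ (your structural fact, whose proof by extending a child edge backward to a leaf is essentially the same content as the fact, invoked by the paper via \cite[Lemma 1.3]{LTZ2}, that every simple path ending at $r$ is increasing), and then exploit the ascend-then-descend structure of simple paths in a rooted tree: since the last step ascends, there are no descending steps, so the whole path is an ancestor chain. This eliminates the case analysis and yields a slightly stronger intermediate statement (the path is an initial segment of the $v_1$-to-$r$ geodesic), which makes the final verification that $v_{2i}v_{2i+1}\in S(r,G_\omega)$ immediate. The one point you leave implicit is the degenerate case $v_{2m}=r$, where your structural fact does not apply at $v_{2m}$; but there $v_{2m}$ is the parent of $v_{2m-1}$ automatically, since $r$ has no parent, so nothing breaks.
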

\begin{proof} From the assumption we have
\begin{equation}\label{firstV1}
    \omega(v_{2i-1}v_{2i})\geqslant \mu(v_{2i})\geqslant \omega(v_{2i}v_{2i+1}),\ \text{ for any }  i\in [m-1].
\end{equation}
Let $u\in N_G(v_{2m})$ such that $\omega(v_{2m}u) = \mu(v_{2m})$, then $\omega(v_{2m-1}v_{2m}) < \omega(v_{2m}u)$. Let $W = \{v_1,\ldots,v_{2m}\}$. 
We will now consider two cases:
\medskip

{\it Case $1$}: $r\in W$. Then,  $r=v_{2m}$. Indeed, assume that $r\ne v_{2m}$ so that  $r = v_i$ for some $i\in [2m-1]$. Choose the simple path
$$u\to v_{2m}\to v_{2m-1}\to \cdots \to v_i,$$
then we have  $\omega(uv_{2m})\leqslant \omega(v_{2m}v_{2m-1})$, which contradicts the fact that  $\omega(v_{2m-1}v_{2m}) < \omega(v_{2m}u)$, so $r=v_{2m}$. Now, consider the simple path
$$v=v_1\to v_2\to \cdots \to v_{2m-1}\to v_{2m}=r$$ 
we have $\omega(v_{2i-1}v_{2i})\leqslant \omega(v_{2i}v_{2i+1})$, for any $i\in [m-1]$.  According to Eq. $(\ref{firstV1})$, $\omega(v_{2i-1}v_{2i})=\omega(v_{2i}v_{2i+1})$, which 
implies that $v_{2i}v_{2i+1} \in	S(r,G_\omega)$ for  any $i\in [m-1]$.

\medskip

{\it Case $2$}: $r\notin W$. Then, we can take a simple path to $r$ in the form $u_1\to u_2 \to \cdots \to u_k=r$ such that $u_1\in W$ and $u_2,\ldots, u_k \notin W$. In  this case,  $u_1 = v_{2m}$. Indeed, if $u_1\ne v_{2m}$, then  $u_1= v_i$ for some $i\in [2m-1]$. From the simple path
$$u\to v_{2m}\to v_{2m-1}\to \cdots\to  v_{i+1}\to v_i=u_1 \to u_2\to  \cdots \to u_{k-1}\to u_{k}=r,$$ 
we get $\omega(uv_{2m})\leqslant \omega(v_{2m}v_{2m-1})$. This contradicts the fact that  $\omega(v_{2m-1}v_{2m}) < \omega(v_{2m}u)$. 
Now, consider the simple path
$$v=v_{1}\to v_{2}\to \cdots\to  v_{2m-1}\to v_{2m}=u_1 \to u_2\to  \cdots \to u_{k-1}\to u_{k}=r,$$ 
we have $\omega(v_{2i-1}v_{2i})\leqslant \omega(v_{2i}v_{2i+1})$, for any $i\in [m-1]$.  According to Eq. $(\ref{firstV1})$, $\omega(v_{2i-1}v_{2i})=\omega(v_{2i}v_{2i+1})$, which 
implies that $v_{2i}v_{2i+1} \in	S(r,G_\omega)$ for  any $i\in [m-1]$.
\end{proof}

\begin{Lemma} \label{uvOuside} Let $u,v\in V(G)\setminus  A(G_\omega)$. Then, for every simple path from $u$ to $v$ such that $ u=v_1\to v_2\to \cdots \to v_{2m-1}\to v_{2m}=v$ and  $m\geqslant 2$,  we have  $v_{2i}v_{2i+1}\in S(r,G_\omega)$  for all  $i\in[m-1]$.
\end{Lemma}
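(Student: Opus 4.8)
The plan is to turn the two hypotheses $u\notin A(G_\omega)$ and $v\notin A(G_\omega)$ into pointwise equalities for the edge weights along the given path, and then to read off, edge by edge, that $v_{2i}v_{2i+1}$ is special by playing the maximality of $\mu$ against the increasing property of $G_\omega$. First I would record the consequences of lying outside $A(G_\omega)$. Since $u=v_1\notin A(G_\omega)$, the initial segment $v_1\to v_2\to\cdots\to v_{2i}$ is a simple path of even order starting at $u$, so the defining condition of $A(G_\omega)$ is not met by it, which forces $\omega(v_{2i-1}v_{2i})\geqslant\mu(v_{2i})$ and hence $\omega(v_{2i-1}v_{2i})=\mu(v_{2i})$ for every $i\in[m]$. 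Reversing roles, since $v=v_{2m}\notin A(G_\omega)$, the terminal segment $v_{2m}\to v_{2m-1}\to\cdots\to v_{2i+1}$ has $2(m-i)$ vertices (even), so the same reasoning yields $\omega(v_{2i+1}v_{2i+2})=\mu(v_{2i+1})$ for every $i\in[m-1]$. These two families of equalities are the engine of the argument.

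Next I would pin down the orientation of the path toward the root. Let $v_p$ be the vertex of $v_1\to\cdots\to v_{2m}$ closest to $r$; in the tree rooted at $r$ one then has that $v_{j+1}$ is the parent of $v_j$ for $j<p$, and $v_{j-1}$ is the parent of $v_j$ for $j>p$. Fix $i\in[m-1]$ and split into two cases according to whether $p\geqslant 2i+1$ or $p\leqslant 2i$, that is, according to which endpoint of $v_{2i}v_{2i+1}$ is the one nearer $r$. If $p\geqslant 2i+1$, the unique path from $v_{2i-1}$ to $r$ is $v_{2i-1}\to v_{2i}\to v_{2i+1}\to\cdots\to v_p\to\cdots\to r$, so the increasing property applied to its first two (consecutive) edges gives $\omega(v_{2i-1}v_{2i})\leqslant\omega(v_{2i}v_{2i+1})$; combined with $\omega(v_{2i-1}v_{2i})=\mu(v_{2i})\geqslant\omega(v_{2i}v_{2i+1})$ from the first paragraph, this forces $\omega(v_{2i-1}v_{2i})=\omega(v_{2i}v_{2i+1})$. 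This is precisely the witness demanded by the definition of a special edge, read with $v_{2i-1}\to v_{2i}\to v_{2i+1}$ in the roles of $v_1\to v_2\to v_3$, so $v_{2i}v_{2i+1}\in S(r,G_\omega)$.

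The case $p\leqslant 2i$ is symmetric. Now the unique path from $v_{2i+2}$ to $r$ is $v_{2i+2}\to v_{2i+1}\to v_{2i}\to\cdots\to v_p\to\cdots\to r$, the increasing property gives $\omega(v_{2i+2}v_{2i+1})\leqslant\omega(v_{2i+1}v_{2i})$, and $\omega(v_{2i+2}v_{2i+1})=\mu(v_{2i+1})\geqslant\omega(v_{2i}v_{2i+1})$ from the first paragraph squeezes these into $\omega(v_{2i+2}v_{2i+1})=\omega(v_{2i+1}v_{2i})$; taking $v_{2i+2}\to v_{2i+1}\to v_{2i}$ in the roles of $v_1\to v_2\to v_3$ again exhibits $v_{2i}v_{2i+1}$ as special. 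Since $p$ is an integer it must satisfy $p\geqslant 2i+1$ or $p\leqslant 2i$, so the two cases are exhaustive and every edge $v_{2i}v_{2i+1}$, $i\in[m-1]$, lies in $S(r,G_\omega)$.

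I expect the difficulty to be bookkeeping rather than conceptual. I must verify that each prefix or suffix used to witness non-membership in $A(G_\omega)$ genuinely has even order (so that it is an admissible test path in the definition of $A(G_\omega)$), that the chosen neighbour $v_{2i-1}$ in the first case and $v_{2i+2}$ in the second actually exists — which is exactly guaranteed by $1\leqslant i\leqslant m-1$ — and that the displayed path to $r$ is simple, including the degenerate boundary case $v_p=r$, where the witness path has length precisely two and $k=3$ in the definition of a special edge. Once the parent relations around $v_p$ are fixed, each of these checks is immediate.
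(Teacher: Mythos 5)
Your proof is correct and follows essentially the same route as the paper's: both derive the inequalities $\omega(v_{2i-1}v_{2i})\geqslant\mu(v_{2i})\geqslant\omega(v_{2i}v_{2i+1})$ and $\omega(v_{2i+1}v_{2i+2})\geqslant\mu(v_{2i+1})\geqslant\omega(v_{2i}v_{2i+1})$ from $u,v\notin A(G_\omega)$ via the even initial and terminal segments, and then force equality from the fact that every simple path toward the root is increasing (the paper invokes \cite[Lemma 1.3]{LTZ2} here, which also covers your tacit extension of such a path backwards to a leaf). Your single case split on the position of the vertex $v_p$ of the path closest to $r$ merely unifies the paper's two cases ($r\in W$ versus $r\notin W$, with a symmetry relabeling to place the attachment vertex at an even index), so the difference is organizational rather than mathematical.
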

 \begin{proof} 
By  the hypothesis $u,v \notin A(G_\omega)$, we can conclude that for  any $i\in[m-1]$, 
\begin{equation}\label{firstV}
    \omega(v_{2i-1}v_{2i})\geqslant \mu(v_{2i})\geqslant \omega(v_{2i}v_{2i+1})\text{ and }\omega(v_{2i+1}v_{2i+2})\geqslant\mu(v_{2i+1})\geqslant \omega(v_{2i}v_{2i+1}).
\end{equation}
Let $W = \{v_1,\ldots,v_{2m}\}$. We will now consider two cases.

{\it Case $1$}: If $r\in W$, then, by symmetry, we can assume that $r = v_{2j}$ for some $j\in[m]$. In this case, there are two induced subpaths  to the root $r$ of the form  $v_{1}\to v_{2}\to\cdots \to v_{2j}=r$ and $v_{2m}\to v_{2m-1}\to\cdots \to v_{2j+1}\to v_{2j}=r$. By   \cite[Lemma 1.3]{LTZ2}, these  two simple paths are increasing. Therefore, for these paths, we have $\omega(v_{2i-1}v_{2i})\leqslant \omega(v_{2i}v_{2i+1})$  for any  $i\in[j-1]$, and   $\omega(v_{2i+1}v_{2i+2})\leqslant \omega(v_{2i}v_{2i+1})$  for any  $j\le i\le m-1$, respectively. Combining condition  $(\ref{firstV})$ , we can obtain that 
$v_{2i}v_{2i+1}\in S(r,G_\omega)$  for any $i\in[m-1]$. 

{\it Case $2$}: If $r\notin W$, then we can take a simple path to $r$ in the form
 $u_1\to u_2 \to \cdots \to u_k=r$ such that $u_1\in W$ and $u_2,\ldots, u_k \notin W$. By symmetry, we can assume that $u_1=v_{2j}$ for some $j\in [m]$.
Thus, there are two paths to the root $r$:  $v_1\to v_2\to\cdots\to v_{2j-1}\to  v_{2j}=u_1 \to u_{2}\to \cdots \to v_{k-1}\to u_k=r$ and $v_{2m}\to v_{2m-1}\to\cdots\to v_{2j+1}\to  v_{2j}=u_1 \to u_{2}\to \cdots \to v_{k-1}\to u_k=r$.  Using the same argument as in Case $1$, we can show that $v_{2i}v_{2i+1}\in S(r,G_\omega)$ for any $i\in[m-1]$. 
\end{proof}

\begin{Lemma}\label{L1} $(A(G_\omega))+I(K_{U,V}) \subseteq (I(G_\omega)^t \colon f(r,G_\omega))$, where $t=|S(r,G_\omega)|+1$.
\end{Lemma}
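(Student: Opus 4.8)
The plan is to check the stated containment generator by generator. Since $(I(G_\omega)^t\colon f(r,G_\omega))$ is an ideal, it suffices to prove $g\cdot f(r,G_\omega)\in I(G_\omega)^t$ for every minimal monomial generator $g$ of $(A(G_\omega))+I(K_{U,V})$. I will write $f(r,G_\omega)=P_1P_2$ with $P_1=\prod_{xy\in S(r,G_\omega)}(xy)^{\omega(xy)}$ and $P_2=\prod_{v\in V(G)}v^{\mu(v)-1}$; note that $P_1$ is a product of exactly $t-1=|S(r,G_\omega)|$ generators of $I(G_\omega)$, so $P_1\in I(G_\omega)^{t-1}$. The minimal generators of $(A(G_\omega))+I(K_{U,V})$ are the variables $v$ with $v\in A(G_\omega)$ and the products $uv$ ($u\in U$, $v\in V$) with $u,v\notin A(G_\omega)$; a product $uv$ having an endpoint in $A(G_\omega)$ is a multiple of that vertex and follows automatically once the vertex case is settled.

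The core is a single divisibility mechanism. Suppose $w_1\to\cdots\to w_{2m}$ is a simple path in $G$ whose ``even'' edges $w_{2i}w_{2i+1}$ ($i\in[m-1]$) are all special. Because these $m-1$ special edges occur among the $t-1$ factors of $P_1$, I may split $P_1=\big(\prod_{i=1}^{m-1}(w_{2i}w_{2i+1})^{\omega(w_{2i}w_{2i+1})}\big)\cdot Q$, where $Q$ is a product of the remaining $t-m$ generators (here $m\le t$ since $m-1\le|S(r,G_\omega)|$). I then show that $\prod_{i=1}^{m}(w_{2i-1}w_{2i})^{\omega(w_{2i-1}w_{2i})}$, the product of the $m$ ``odd'' generators, divides the monomial $(\text{endpoint variables})\cdot\prod_{i=1}^{m-1}(w_{2i}w_{2i+1})^{\omega(w_{2i}w_{2i+1})}\cdot P_2$, by comparing, for each $w_j$, its exponent on the two sides. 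For an interior vertex this reduces, using $\omega(w_{2i-1}w_{2i})\le\mu(w_{2i})$, $\omega(w_{2i+1}w_{2i+2})\le\mu(w_{2i+1})$ and $\omega(\cdot)\ge 1$, to the trivial $\omega(w_{2i}w_{2i+1})\ge 1$. Granting this, $g\cdot f(r,G_\omega)$ is divisible by the product of these $m+(t-m)=t$ generators of $I(G_\omega)$, hence lies in $I(G_\omega)^t$.

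For a vertex generator $v\in A(G_\omega)$ I take the shortest path $v=v_1\to\cdots\to v_{2m}$ with $\omega(v_{2m-1}v_{2m})<\mu(v_{2m})$ furnished by Lemma~\ref{vInside}, which also certifies that its even edges are special; only $v_1$ receives an extra variable (from $g=v_1$), so the two endpoint comparisons are $\omega(v_1v_2)\le\mu(v_1)$ and $\omega(v_{2m-1}v_{2m})\le\mu(v_{2m})-1$. For an edge generator $uv$ with $u,v\notin A(G_\omega)$: if $uv\in E(G)$ this is the case $m=1$ and $(uv)^{\omega(uv)}$ plainly divides $uv\cdot P_2$; if $uv\notin E(G)$, bipartiteness forces the unique path from $u$ to $v$, written $u=v_1\to\cdots\to v_{2m}=v$, to have an even number of vertices with $m\ge 2$, Lemma~\ref{uvOuside} certifies its even edges are special, and now both $v_1$ and $v_{2m}$ receive an extra variable so the endpoint comparisons become the automatic $\omega(v_1v_2)\le\mu(v_1)$ and $\omega(v_{2m-1}v_{2m})\le\mu(v_{2m})$.

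The main obstacle is the exponent bookkeeping and, in particular, the far-endpoint comparison at $v_{2m}$ in the vertex case: there $v_{2m}$ carries no extra variable, so one truly needs the strict inequality $\omega(v_{2m-1}v_{2m})<\mu(v_{2m})$ built into the definition of $A(G_\omega)$, i.e.\ exactly the defining property of the path from Lemma~\ref{vInside}. The special-edge conclusions of Lemmas~\ref{vInside} and~\ref{uvOuside} enter precisely to guarantee that the even path edges sit inside $P_1$, so that splitting off $Q$ leaves a product of exactly $t-m$ generators and the total count lands on $t=|S(r,G_\omega)|+1$; verifying this count, together with the two endpoint inequalities, is where all the hypotheses are consumed.
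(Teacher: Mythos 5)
Your proof is correct and follows essentially the same route as the paper's: both split $f(r,G_\omega)$ so that the special edges certified by Lemmas~\ref{vInside} and~\ref{uvOuside} supply $m-1$ of the $t-1$ factors of $P_1$, then trade the even path edges for the $m$ odd edge generators via the same exponent inequalities (your vertex-by-vertex divisibility comparisons are exactly the nonnegativity of the exponents in the paper's explicit cofactors $g_1$ and $g_2$, including the strict inequality $\omega(v_{2m-1}v_{2m})<\mu(v_{2m})$ at the far endpoint in the vertex case). Your treatment of the adjacent case $uv\in E(G)$, which the paper subsumes under ``similar arguments,'' is handled cleanly as the $m=1$ instance of the same mechanism.
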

\begin{proof} Let $I = I(G_\omega)$ and $f=f(r,G_\omega)$.  We divide the proof into two steps:

{\it Step $1$}:  $zf\in I^t$ for every $z\in A(G_\omega)$. Indeed, let $z=v_1\to v_2\to \cdots\to v_{2m-1}\to v_{2m}$
be the shortest path in $G$ such that $\omega(v_{2m-1}v_{2m}) < \mu(v_{2m})$. By Lemma \ref{vInside}, we have
$$v_{2i}v_{2i+1}\in S(r,G_\omega),\ \text{ for any } i\in[m-1].$$

Since $t-1= |S(r, G_\omega)|$, we can write $f$ as
\begin{equation}\label{EQ01}
f = f_1\cdots f_{t-m}\left(\prod_{i=1}^{m-1} (v_{2i}v_{2i+1})^{\omega(v_{2i}v_{2i+1})}\right)\left(\prod_{v\in V(G)}v^{\mu(v)-1}\right),
\end{equation}
where $f_1\cdots f_{t-m}\in \mathcal G(I)$.  On the other hand, let $W=V(G)\setminus\{v_1,\ldots,v_{2m}\}$, then 
\begin{equation*}
    z\left(\prod_{i=1}^{m-1} (v_{2i}v_{2i+1})^{\omega(v_{2i}v_{2i+1})}\right)\left(\prod_{v\in V(G)}v^{\mu(v)-1}\right)=g_1g_2\prod_{i=1}^m (v_{2i-1}v_{2i})^{\omega(v_{2i-1}v_{2i})}\in  I^{m},
\end{equation*}
where
$$g_1 = z^{\mu(z)-\omega(zv_2)}\left(\prod_{i=1}^{m-1}v_{2i+1}^{\mu(v_{2i+1})+\omega(v_{2i}v_{2i+1})-\omega(v_{2i+1}v_{2i+2})-1}\right)\left(\prod_{v\in W} v^{\mu(v)-1}\right)
\in S,
$$
and
$$g_2 = v_{2m}^{\mu(v_{2m})-\omega(v_{2m-1}v_{2m})-1}\left(\prod_{i=1}^{m-1}v_{2i}^{\mu(v_{2i})+\omega(v_{2i}v_{2i+1})-\omega(v_{2i-1}v_{2i})-1}\right)\in S.
$$
By the expression  $(\ref{EQ01})$ of $f$, we obtain $zf\in I^t$. 

{\it Step $2$}:  $uvf\in I^t$ for every $u\in U$ and any $v\in V$. Indeed, if $u\in A(G_\omega)$ or $v\in A(G_\omega)$, then the result follows from Case $1$.
In the following, we can assume that $u,v\notin A(G_\omega)$. 
By the choice of $u$ and $v$, there exists a simple path from $u$ to $v$ 
$$u=u_1\to u_2\to \cdots \to u_{2\ell-1}\to u_{2\ell} = v.$$
According to  Lemma \ref{uvOuside}, we have
$$u_{2i}u_{2i+1} \in S(r,G_\omega),\ \text{ for any } i\in[\ell-1].$$
Using similar arguments as in  Case $1$, we can verify that $uvf\in I^t$, and the result follows.
\end{proof}

\begin{Lemma}\label{L2} $(I(G_\omega)^t \colon f(r,G_\omega)) \subseteq (A(G_\omega))+I(K_{U,V})$, where $t=|S(r,G_\omega)|+1$.
\end{Lemma}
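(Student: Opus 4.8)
The plan is to argue the containment monomial by monomial and by contraposition. Since $(I(G_\omega)^t\colon f(r,G_\omega))$ is a monomial ideal, it suffices to show that every monomial $g$ with $g\,f(r,G_\omega)\in I(G_\omega)^t$ belongs to $(A(G_\omega))+I(K_{U,V})$; this is the converse of Lemma~\ref{L1}. Assume, to the contrary, that $g\notin (A(G_\omega))+I(K_{U,V})$. Then $\supp(g)\cap A(G_\omega)=\emptyset$ and $g$ is divisible by no generator $uv$ of $I(K_{U,V})$, so $\supp(g)$ lies entirely in one part of the bipartition. As $A(G_\omega)$, $S(r,G_\omega)$, $I(K_{U,V})$ and $f(r,G_\omega)$ are all unchanged when $U$ and $V$ are interchanged, I may assume $\supp(g)\subseteq V$. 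The goal becomes to show $g\,f(r,G_\omega)\notin I(G_\omega)^t$.

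Write $I=I(G_\omega)$ and $f=f(r,G_\omega)$. Membership $gf\in I^t$ is equivalent to the existence of edge multiplicities $m_e\ge 0$ with $\sum_e m_e=t$ such that $\prod_e (e)^{m_e\omega(e)}$ divides $gf$, i.e.\ $\sum_{e\ni w}m_e\,\omega(e)\le \deg_w(gf)$ at every vertex $w$. Root $G$ at $r$; the increasing hypothesis forces the edge from a non-root vertex $w$ to its parent $p(w)$ to be heaviest at $w$, so $\mu(w)=\omega(w\,p(w))$, and a direct count gives $\deg_w(f)=\mu(w)-1+\sum_{e\in S(r,G_\omega),\,w\in e}\omega(e)$. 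Subtracting off the packing that uses each special edge exactly once, namely setting $n_e=m_e-\chi(e)$ where $\chi$ is the indicator of $S(r,G_\omega)$, turns the divisibility conditions into $\sum_{e\ni w}n_e\,\omega(e)\le \deg_w(g)+\mu(w)-1$ for all $w$, with $n_e\ge -1$ on special edges and $n_e\ge 0$ otherwise. Since any packing dividing $gf$ has $\sum_e m_e=|S(r,G_\omega)|+\sum_e n_e$, the statement $gf\notin I^t=I^{|S(r,G_\omega)|+1}$ is equivalent to the assertion that $\sum_e n_e\le 0$ for every feasible $(n_e)$, under the standing hypotheses $\supp(g)\subseteq V$ and $\supp(g)\cap A(G_\omega)=\emptyset$.

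The heart of the matter is this inequality, and it is where I expect the real work. The decisive local fact is that the available slack $\mu(w)-1$ is strictly below the heaviest incident weight $\mu(w)$: consequently a value $n_e>0$ on an edge $e$ that is heaviest at an endpoint $w$ forces, at $w$, either $\deg_w(g)\ge 1$ (so $w\in\supp(g)$) or a negative $n_{e'}$ on some incident edge $e'$, which is then automatically special. Following these forced compensations links the edges with $n_e\ne 0$ into an alternating configuration; passing to a net-positive configuration with $\sum_e|n_e|$ minimal, the tree structure forces it to be a single simple path $P$ whose negative steps are special edges, and bipartiteness makes $P$ even. Reading $P$ from one end, Lemmas~\ref{vInside} and~\ref{uvOuside} identify its special (negative) edges and constrain the weights of the adjacent positive edges. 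Using $\mu(w)=\omega(w\,p(w))$ and the uniqueness of parents, one verifies that the two ends of $P$ cannot both meet their path-edge sub-maximally; hence at one end, say $z$, the incident edge is heaviest, so feasibility at $z$ needs $\deg_z(g)\ge 1$, i.e.\ $z\in\supp(g)$. But traversing $P$ from $z$ produces an even simple path starting at $z$ whose final edge is strictly sub-maximal — exactly the condition $z\in A(G_\omega)$. This contradicts $\supp(g)\cap A(G_\omega)=\emptyset$, so $\sum_e n_e\le 0$ and $gf\notin I^t$.

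The principal obstacle lies in two places within the previous paragraph: first, the extremal reduction of an arbitrary net-positive $(n_e)$ to a single alternating simple path (verifying that minimality of $\sum_e|n_e|$ rules out branch vertices and cancels redundant special edges); and second, the orientation analysis on the rooted tree showing that the two endpoints of $P$ cannot both be sub-maximal, which is what pins the heavy endpoint simultaneously inside $\supp(g)$ and $A(G_\omega)$. The increasing hypothesis (through $\mu(w)=\omega(w\,p(w))$) and Lemmas~\ref{vInside} and~\ref{uvOuside} are the essential inputs here; once the path $P$ is extracted, the rest is the degree bookkeeping of the first two paragraphs.
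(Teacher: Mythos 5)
Your reformulation is sound as far as it goes: $gf\in I(G_\omega)^t$ is indeed equivalent to the existence of edge multiplicities $m_e\geqslant 0$ with $\sum_e m_e = t$ whose product divides $gf$; the shift $n_e=m_e-\chi(e)$ and the count $\deg_w(f)=\mu(w)-1+\sum_{e\in S(r,G_\omega),\,w\in e}\omega(e)$ are correct; and the two facts you lean on are true in an increasing weighted tree (for a non-root $w$ the parent edge realizes $\mu(w)$, hence no edge is sub-maximal at both endpoints). So the lemma is genuinely equivalent to: every feasible $(n_e)$ has $\sum_e n_e\leqslant 0$. But that equivalent statement is where the entire content of the lemma lives, and it is exactly the part you have not proved: the ``extremal reduction of a net-positive $(n_e)$ to a single alternating simple path'' is asserted with ``one verifies,'' and as stated it is doubtful. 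Nothing in minimality of $\sum_e|n_e|$ prevents branching: a special edge $e'$ with $n_{e'}=-1$ that is the parent edge of a $U$-vertex $w$ supplies slack $\omega(e')$ there, and since $\mu(w)=\omega(e')$ two positive child edges $e_1,e_2$ with $\omega(e_1)+\omega(e_2)\leqslant 2\omega(e')-1$ are simultaneously feasible at $w$; pruning one of them lowers $\sum_e n_e$ and may destroy net-positivity, while deleting a positive--negative pair preserves the net sum but not feasibility at the other vertices they touch. Values $n_e\geqslant 2$ on a single edge are likewise not excluded. The same applies to your second flagged step (that the two endpoints of $P$ cannot both be sub-maximal): your parenthetical argument covers a lone edge, but not a path whose interior interacts with special edges and with vertices of $\supp(g)$ carrying large degree. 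Since both steps are deferred, the proposal reduces the lemma to an equivalent combinatorial assertion and stops short of proving it.

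For contrast, the paper avoids this global analysis entirely by induction on $|V(G)|$: it selects the deep pendant configuration of Lemma~\ref{path}, peels off a leaf when the special-edge set is unchanged (its Case 1), and when a special edge $v_{s-2}v_{s-1}$ must be consumed it invokes Lemma~\ref{colon} to get $(I(G''_\omega)^t\colon (v_{s-2}v_{s-1})^{\omega(v_{s-2}v_{s-1})})=I(G''_\omega)^{t-1}$, which exactly matches the drop $|S(r,G''_\omega)|=|S(r,G_\omega)|-1$ and lets the induction close. That leaf-peeling silently disposes of all the branched and high-multiplicity configurations your path-reduction must confront head-on. To salvage your route you would need an honest decomposition theorem for feasible $(n_e)$ on trees (say, a flow decomposition into alternating walks with a careful accounting of where $g$-slack and special-edge slack can sit), or else run an induction on $|V(G)|$ yourself --- at which point you would essentially be reconstructing the paper's argument.
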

\begin{proof} Let $I = I(G_\omega)$ and $f=f(r,G_\omega)$. We will prove the statement by induction on $n=|V(G)|$. 
If $G$ is a star graph with a center $r$, then  $|S(r,G_\omega)|=0$, and
$f = r^{\mu(r)-1}\prod_{v \in V} v^{\omega(rv)-1}$. In this case, $A(G_\omega) = \{v\in V\mid \omega(rv) < \mu(r)\}$,  where  $U=\{r\}$ and $V=N_G(r)$. It is easy to see that 
 $(I\colon f)=(A(G_\omega)) + I(K_{U,V})$, and the result holds.

In the following, we can assume that $G$ is not a star graph. Then $n\ge 4$. Let $g\in \mathcal{G}(I^t:f)$, then $gf\in I^t$.  We
can write $gf$  as
\begin{equation}\label{EQ5}
gf = hf_1f_2\cdots f_t,
\end{equation}
where $h$ is a monomial and each $f_i\in \mathcal{G}(I)$.   We consider the following two cases.

\medskip
{\it Case $1$}: If there is  a pendant edge $xy$ in $G$ that satisfies the following four conditions:  $\deg_G(y)=1$, $y\ne r$, $y\nmid g$  and $S(r,G_\omega)=S(r,G'_\omega)$ where $G'=G\setminus y$.
Then,  $y\nmid f_i$ for  all  $i\in[t]$. Indeed, if $y| f_j$ for some $j\in [t]$, then $f_j = (xy)^{\omega(xy)}$, since  $\deg_G(y)=1$. Thus, by the expression  $(\ref{EQ5})$,  $\deg_y(gf) \geqslant \deg_y(f_j)=\omega(xy)$.  However, since $y\nmid g$, 
$\deg_y(gf) = \deg_y(f)=\mu(y)-1=\omega(xy)-1$,  which is 
a contradiction. This implies that $f_i \in \mathcal G(I(G'_\omega))$ for all $i\in [t]$.

Since $\deg_y(gf)=\omega(xy)-1$ and $y\nmid f_i$ for  all  $i\in[t]$, by the expression  $(\ref{EQ5})$, we have $y^{\omega(xy)-1}\mid h$. Let $h=y^{\omega(xy)-1}h_1$ for some monomial $h_1$.
Thus, the expression  $(\ref{EQ5})$ becomes $gf=y^{\omega(xy)-1}h_1f_1f_2\cdots f_t$.
Since $S(r,G_\omega)=S(r,G'_\omega)$,   $f = f'y^{\omega(xy)-1}$ and  $gf' = h_1f_1\cdots f_t \in I(G'_\omega)^t$, where $f'=f(r,G'_\omega)$. By the induction hypothesis,  $g\in (A(G'_\omega)) + I(K_{U',V'})\subseteq (A(G_\omega)) + I(K_{U,V})$, where $(U',V')$ is a bipartition of $G'$.
\medskip

{\it Case $2$}: Assume that no pendant edge of $G$  satisfies Case $1$ and that  $g\notin (A(G_\omega))$.  By Lemma \ref{path}, there is a longest simple path $\P: r = v_0\to v_1\to\cdots\to v_{s-1}\to v_s$ in $G$ from $r$ such that $s\geqslant 2$ and 
\begin{itemize}
	\item[(i)] $v_s$ is a leaf;
	\item[(ii)] if $u\in N_G(v_{s-2})$ is a non-leaf, then $\omega(v_{s-1}v_{s-2})\leqslant \omega(v_{s-2}u)$;
	\item[(iii)] $N_G(v_{s-1})$ has only one non-leaf $v_{s-2}$;
	\item[(iv)] $\omega(v_{s-1}u) \leqslant \omega(v_{s-2}v_{s-1})$ for all $u\in N_G(v_{s-1})$;
	\item[(v)] $\omega(v_{s-1}v_s) \leqslant \omega(v_{s-1}u)$ for all $u\in N_G(v_{s-1})$.
\end{itemize}
Then, $\omega(v_{s-1}v_s)=\omega(v_{s-2}v_{s-1})$. Indeed, By (v), we can assume that $\omega(v_{s-1}v_s) < \omega(v_{s-2}v_{s-1})$. By choosing $u_1=v_s$  and $u_2=v_{s-1}$ in the definition  of $A(G_\omega)$, 
we can deduce that $v_s\in A(G_\omega)$ and $S(r,G_\omega)=S(r,(G\setminus v_s)_\omega)$. Therefore,  $v_s\nmid g$. Since $s\geqslant 2$, we have $r\notin L_G(v_{s-1})$ and  $v_s\ne r$.
From the simple path,  we know that there is  a pendant edge $v_{s-1}v_s$ in $G$ that satisfies the four conditions that  $\deg_G(v_s)=1$, $v_s\ne r$, $v_s\nmid g$  and $S(r,G_\omega)=S(r,G''_\omega)$ where $G''=G\setminus v_s$.
This contradicts the assumption.
Therefore, $\omega(v_{s-1}v_s)=\omega(v_{s-2}v_{s-1})$. Combining  the conditions (iii), (iv), and (v), we can conclude that  $\omega(v_{s-1}u)=\mu(v_{s-1})$ for every $u\in N_G(v_{s-1})$.

\medskip
Next,  we will show that $g\in (A(G_\omega)) + I(K_{U,V})$ by studying the subgraph $G''=G\setminus v_s$ and  applying  induction. We will consider the following two subcases:

\medskip
{\it Subcase $2.1$}: If  $|L_G(v_{s-1})| \geqslant 2$, then $S(r,G_\omega)=S(r,G''_\omega)$ since $\omega(v_{s-1}u)=\omega(v_{s-2}v_{s-1})$ for every $u\in L_G(v_{s-1})$.  Thus, $v_s| g$ since $v_s$ satisfies the other three conditions in Case $1$. 
When  $v_{s-1}|g$,   $g\in I(K_{U,V})$. Now, we assume that $v_{s-1}\nmid g$. In this case,
 there exists at most one $z\in L_G(v_{s-1})$ such that $z|f_i$ for some $i\in[t]$. Assuming by contradiction that there exist $z_1,z_2\in L_G(v_{s-1})$ with  $z_1\ne z_2$ such that  $z_1|f_i$ and  $z_2|f_j$ for some $i,j \in[t]$.  We can write $f_i$ and $f_j$ as  $f_i=(v_{s-1}z_1)^{\omega(v_{s-1}z_1)}$ and $f_j=(v_{s-1}z_2)^{\omega(v_{s-1}z_2)}$. Since $z_1\ne z_2$, $i\ne j$. Thus, 
$$\deg_{v_{s-1}}(gf) \geqslant\deg_{v_{s-1}}(f_if_j)=\omega(v_{s-1}z_1)+\omega(v_{s-1}z_2) =2\mu(v_{s-1}).$$
Since $v_{s-1}\nmid g$, 
$\deg_{v_{s-1}}(gf) = \deg_{v_{s-1}}(f)=\omega(v_{s-2}v_{s-1})+\mu(v_{s-1})-1=2\mu(v_{s-1})-1$. The second equality holds due to the expression of $f$.
This is a contradiction.

Since  $|L_G(v_{s-1})| \geqslant 2$, there is at least one $z\in L_G(v_{s-1})$ such that $z\nmid f_i$ for any $i\in [t]$.
Without loss of generality, we can assume that $v_s\nmid f_j$ for any $j\in [t]$. Thus, each $f_i\in \mathcal{G}(I(G''_\omega))$. 
Therefore, from  the expression of $f$, $$\deg_{v_s}(h)=\deg_{v_s}(gf)=\deg_{v_s}(g)+\deg_{v_s}(f)=\deg_{v_s}(g)+\omega(v_{s-1}v_s)-1.$$
We can write $h$ and $g$ as  $h = v_s^{d+\omega(v_{s-1}v_s)-1}h_2$ and $g = v_s^{d}g_1$, where $d=\deg_{v_s}(g)$, monomials $h_2$ and $g_1$ satisfy $v_s\nmid h_2g_1$.
 Since  $S(r,G_\omega)=S(r,G''_\omega)$, we  obtain   $f=f''v_s^{\omega(v_{s-1}v_s)-1}$,  where $f''=f(r,G''_\omega)$.
Therefore,  the expression $(\ref{EQ5})$ becomes 
$$g_1f'' = h_2f_1\cdots f_t\in I(G''_\omega)^t.$$
Note that $|S(r,G_\omega)|=|S(r,G''_\omega)|$. Then, by the induction hypothesis,  $g\in (A(G''_\omega)) + I(K_{U'',V''})\subset (A(G_\omega)) + I(K_{U,V})$, where $U''\sqcup V''=(U\sqcup V)\setminus \{v_s\}$.

\medskip
{\it Subcase $2.2$}: If $L_G(v_{s-1})=\{v_s\}$, then $\omega(v_{s-1}v_{s-2}) \leqslant \omega(zv_{s-2})$  for all  $z\in N_G(v_{s-2})$.
If  $L_G(v_{s-2})=\emptyset$, then, from  the condition (ii), we have that $\omega(v_{s-1}v_{s-2}) \leqslant \omega(zv_{s-2})$  for all  $z\in N_G(v_{s-2})$.
Now, we assume that $L_G(v_{s-2})\neq \emptyset$.  Using the condition (ii) again,   we only need to  show  that $\omega(v_{s-1}v_{s-2}) \leqslant \omega(zv_{s-2})$ 
for  all  $z\in L_G(v_{s-2})$. Suppose for contradiction that there is a $u \in L_G(v_{s-2})$ such that $\omega(v_{s-2}v_{s-1})> \omega(v_{s-2}u)$. Then, by the condition (ii), we have that $\omega(v_{s-2}z)>\omega(v_{s-2}u)$ for all $z\in N_G(v_{s-2})\setminus L_G(v_{s-2})$. This implies that 
$S(r,G_\omega)=S(r,(G\setminus u)_{\omega})$.  By  Definition \ref{notation},  $u\in A(G_\omega)$. Therefore,  $u\nmid g$, and  the pendant edge $v_{s-2}u$ satisfies  
the four conditions that  $\deg_G(u)=1$, $u\ne r$, $u\nmid g$  and $S(r,G_\omega)=S(r,(G\setminus u)_{\omega})$.
This contradicts the assumption.

Note that $L_G(v_{s-1})=\{v_s\}$ and that $\omega(v_{s-1}v_s)=\omega(v_{s-2}v_{s-1})$. Therefore, we can then deduce that $S(r,G_\omega) = S(r,G''_\omega)\cup \{v_{s-2}v_{s-1}\}$. Thus,
$$f = f''(v_{s-2}v_{s-1})^{\omega(v_{s-2}v_{s-1})}v_s^{\omega(v_{s-1}v_s)-1}.$$
We can write $g$ as  $g = v_s^pg_2$ for some monomial $g_2$,  where $p\geqslant 0$ and  $v_s\nmid g_2$. There are two subcases:

(1) If $p=0$, then $v_s\nmid g$, and it follows that 
$\deg_{v_s}(gf)=\deg_{v_s}(f)=\omega(v_{s-1}v_s)-1$. By the expression  $(\ref{EQ5})$,  $v_s\nmid f_i$ for any $i\in[t]$. This implies that  $f_i\in \mathcal{G}(I(G''_\omega))$ and that $v_s^{\omega(v_{s-1}v_s)-1}\mid h$. Let  $h = v_s^{\omega(v_{s-1}v_s)-1}h_3$. Then
$$gf = gf''(v_{s-2}v_{s-1})^{\omega(v_{s-2}v_{s-1})}v_s^{\omega(v_{s-1}v_s)-1}=v_s^{\omega(v_{s-1}v_s)-1} h_3f_1\cdots f_t.$$
Therefore, 
$$gf''\in (I(G''_\omega)^t\colon (v_{s-2}v_{s-1})^{\omega(v_{s-2}v_{s-1})}).$$
According to Lemma \ref{colon}, $(I(G''_\omega)^t \colon (v_{s-2}v_{s-1})^{\omega(v_{s-2}v_{s-1})}) =  I(G''_\omega)^{t-1}$. Thus, $g \in ( I(G''_\omega)^{t-1}:f'')$.
Since $S(r,G_\omega) = S(r,G''_\omega)\cup \{v_{s-2}v_{s-1}\}$, then  $t-1=|S(r,G''_\omega)|+1$. By the induction hypothesis, $( I(G''_\omega)^{t-1}:f'')\in (A(G''_\omega)) + I(K_{U'',V''})$. Therefore,
$g\in (A(G''_\omega)) + I(K_{U'',V''})\subseteq (A(G_\omega)) + I(K_{U,V})$. 

(2) If $p\geqslant 1$, then $v_s| g$. If $v_{s-1}|g$, then $g\in(v_{s-1}v_s)\subseteq I(K_{U,V})$. Otherwise, $v_s| f_j$ for some $j\in[t]$, since $g\in \mathcal{G}(I^t:f)$. Let $v_s|f_t$, then $f_t=(v_{s-1}v_s)^{\mu(v_{s-1})}$, since $v_s$ is a leaf of the path $\P$. By the expressions of $g$ and $f$, we can obtain that
$$gf = g_2v_s^{p-1}f''(v_{s-2}v_{s-1})^{\mu(v_{s-1})}v_s^{\mu(v_{s-1})} = hf_1\cdots f_{t-1}(v_{s-1}v_s)^{\mu(v_{s-1})}.$$
Thus 
$$g_2v_s^{p-1}v_{s-2}^{\mu(v_{s-1})}f'' = hf_1\cdots f_{t-1}.$$
Since $g=v_s^{p}g_2$ and  $v_{s-1}\nmid g$, we have that $v_{s-1}\nmid g_2$ and  $\deg_{v_{s-1}}(g_2v_s^{p-1}v_{s-2}^{\mu(v_{s-1})}f'')=\deg_{v_{s-1}}(f'')=\mu(v_{s-1})-1$.
Thus,  $v_s\nmid f_i$ for all $i\in[t-1]$, and  $v_s^{p-1}| h$ and $f_i\in \mathcal{G}(I(G''_\omega))$. Let $h = v_s^{p-1}h_4$, then
$$g_2v_{s-2}^{\mu(v_{s-1})}\in (I(G''_\omega)^{t-1}\colon f'').$$
 By the induction hypothesis,  $g_2v_{s-2}^{\mu(v_{s-1})}\in (A(G''_\omega)) + I(K_{U'',V''})\subseteq (A(G_\omega)) + I(K_{U,V})$.

If $v_{s-2}\in A(G_\omega)$, then, by Definition \ref{notation},  there is a simple path
$$v_{s-2} = z_1\to z_2\to\cdots\to z_{2\ell}$$
such that  $\omega(z_{2\ell-1}z_{2\ell}) < \mu(z_{2\ell})$. Therefore,  there is a simple path
$$v_s\to v_{s-1}\to v_{s-2} = z_1\to z_2\to\cdots\to z_{2\ell}$$
with $\omega(z_{2\ell-1}z_{2\ell}) < \mu(z_{2\ell})$. Thus,  $v_s\in A(G_\omega)$ by Definition \ref{notation}. Therefore,  $g\in (A(G_\omega))$, since $v_s|g$. This contradicts to $g\notin (A(G_\omega))$. Therefore,  $v_{s-2}\notin A(G_\omega)$. 
Note that $g_2\notin (A(G_\omega))$ since $g\notin (A(G_\omega))$. Thus, $g_2v_{s-2}^{\mu(v_{s-1})}\notin (A(G_\omega))$. 
Since $g_2v_{s-2}^{\mu(v_{s-1})}\in (A(G_\omega)) + I(K_{U,V})$, $g_2v_{s-2}^{\mu(v_{s-1})}\in I(K_{U,V})$.  Therefore, there exist $a\in U$ and $b\in V$ such that $ab\mid g_2v_{s-2}^{\mu(v_{s-1})}$. 
Since   $V(G)=U\sqcup V$, we can  assume that $v_s\in U$ by  symmetry. Then,  $v_{s-1}\in V$ and  $v_{s-2}\in U$ by the path $\P$.  Since $b\mid g_2v_{s-2}^{\mu(v_{s-1})}$ and   $b\ne v_{s-2}$, $b\mid g_2$. Therefore, $b|g$, which implies that $v_sb\mid g$. Therefore,  $g\in I(K_{U,V})$. We  have completed the proof.
\end{proof}

\begin{Lemma}\label{UV} $U\nsubseteq A(G_\omega)$ and $V\nsubseteq A(G_\omega)$.
\end{Lemma}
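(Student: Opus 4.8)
The plan is to exhibit one explicit vertex in each class of the bipartition that fails to lie in $A(G_\omega)$. Since $V(G)=U\sqcup V$, the root $r$ lies in one of the two classes and every neighbour of $r$ lies in the other; so it suffices to produce \emph{two adjacent} vertices, namely $r$ and a carefully chosen neighbour $u_0$ of $r$, both outside $A(G_\omega)$. Being adjacent they sit in different classes, and the conclusion $U\nsubseteq A(G_\omega)$, $V\nsubseteq A(G_\omega)$ then follows at once.

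The technical heart is an observation about descending paths. By \cite[Lemma 1.3]{LTZ2} (equivalently, from the defining increasing property after extending a path to $r$ backwards to a leaf in its subtree), every simple path ending at $r$ is increasing. I would first record the consequence: \emph{if $x=w_0\to w_1\to\cdots\to w_j$ is a simple path with $j\geqslant 1$ moving monotonically away from $r$, that is, each $w_{i+1}$ a child of $w_i$ in the tree rooted at $r$, then $\mu(w_j)=\omega(w_{j-1}w_j)$.} Indeed $w_{j-1}$ is the parent of $w_j$, while for every child $c$ of $w_j$ the path $c\to w_j\to w_{j-1}\to\cdots\to r$ is increasing, so $\omega(cw_j)\leqslant\omega(w_{j-1}w_j)$; hence the parent edge realizes $\mu(w_j)$. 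The point is that at such an endpoint the arriving edge is the heaviest, so the defining inequality $\omega(w_{j-1}w_j)<\mu(w_j)$ of Definition \ref{notation} can never hold there.

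From this, $r\notin A(G_\omega)$ is immediate: any path $r=v_1\to\cdots\to v_{2m}$ with $m\geqslant 1$ descends from $r$, so $\mu(v_{2m})=\omega(v_{2m-1}v_{2m})$ and no path witnesses membership. For the neighbour I would choose $u_0\in N_G(r)$ with $\omega(ru_0)=\mu(r)$, which exists since $r$ is not isolated. Given any path $u_0=v_1\to\cdots\to v_{2m}$, the first step goes either to a child of $u_0$ or to its parent $r$. In the first case the entire path descends from $u_0$ and the observation applies to its endpoint. In the second case: if $2m=2$ the endpoint is $r$ and the arriving edge has weight $\omega(u_0r)=\mu(r)$, so the strict inequality fails; and if $2m\geqslant 4$ the tail $r\to v_3\to\cdots\to v_{2m}$ descends from $r$, so the observation applies again. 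In every case $\omega(v_{2m-1}v_{2m})=\mu(v_{2m})$, whence $u_0\notin A(G_\omega)$.

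Finally, $r$ and $u_0$ are adjacent, hence lie in different parts of $(U,V)$; as neither belongs to $A(G_\omega)$, both $U\nsubseteq A(G_\omega)$ and $V\nsubseteq A(G_\omega)$ hold. The only delicate point is the case analysis for $u_0$: one must use that a simple path in a tree, once it steps to a child, descends monotonically thereafter (it cannot climb back without repeating a vertex), and crucially that the extremal choice $\omega(ru_0)=\mu(r)$ is exactly what rules out the length-one witness $u_0\to r$. I expect this case split, together with the reduction of ``every path to the root is increasing'' to the leaf-based definition, to be the main thing to get right.
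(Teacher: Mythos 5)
Your proof is correct and takes essentially the same approach as the paper, which likewise observes that the root $r$ lies outside $A(G_\omega)$ and that a neighbour $v$ of $r$ chosen with $\omega(rv)=\mu(r)$ also lies outside, the two being adjacent and hence in opposite parts of the bipartition. Your write-up simply supplies the descending-path details (every simple path leaving $r$, or leaving $u_0$ after its first step, ends in a parent edge realizing $\mu$ at the endpoint) that the paper's one-line proof leaves implicit.
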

\begin{proof}  Let  $r\in U$, then $r\notin A(G_\omega)$, which implies  $U\nsubseteq A(G_\omega)$. Let $v\in N_G(r)$ such that $\omega(rv)=\mu(r)$. Then,   $v\notin A(G_\omega)$, and $N_G(r)\subseteq V$. The result follows.
\end{proof}

Now we are ready to prove the first main result of this section.

\begin{Theorem}\label{theorem-depth}
Let $G_\omega$ be an increasing weighted tree. Then, 
$$\depth(S/I(G_\omega)^t)=1 \text{ for all } t\geqslant s(G_\omega)+1.$$
In particular, $\dstab(I(G_\omega))\leqslant s(G_\omega)+1$.
\end{Theorem}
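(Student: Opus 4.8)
The plan is to reduce the theorem to the single colon computation already carried out in Lemmas \ref{L1} and \ref{L2}, and then to read off the depth using the formula of Lemma \ref{depth} together with the monotonicity of Lemma \ref{depth-decreasing}. First I would fix a root $r$ of $G_\omega$ achieving the minimum $s(G_\omega)=s(r,G_\omega)$ and set $t_0:=s(G_\omega)+1$. By Lemma \ref{sG-Equality}, $s(r,G_\omega)=|S(r,G_\omega)|$, so $t_0=|S(r,G_\omega)|+1$, which is exactly the exponent $t$ occurring in Lemmas \ref{L1} and \ref{L2}. The lower bound is immediate: by Lemma \ref{max-ideal}, $\mm\notin\Ass(S/I(G_\omega)^t)$ for every $t\geq 1$, hence $\depth(S/I(G_\omega)^t)\geq 1$ for all $t\geq 1$.

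Next I would compute the depth at $t=t_0$. Combining Lemmas \ref{L1} and \ref{L2} gives the exact equality
\[
(I(G_\omega)^{t_0}:f(r,G_\omega))=(A(G_\omega))+I(K_{U,V}).
\]
The right-hand side is a proper squarefree monomial ideal (generated by the variables in $A(G_\omega)$ together with the squarefree quadrics of $I(K_{U,V})$), so it is radical, and in particular $f(r,G_\omega)\notin I(G_\omega)^{t_0}$. Applying Lemma \ref{depth} to the monomial $f(r,G_\omega)$ therefore yields
\[
\depth(S/I(G_\omega)^{t_0})\leq \depth\bigl(S/((A(G_\omega))+I(K_{U,V}))\bigr).
\]
Setting $U'=U\setminus A(G_\omega)$ and $V'=V\setminus A(G_\omega)$ and killing the variables of $A(G_\omega)$ (every generator of $I(K_{U,V})$ meeting $A(G_\omega)$ is absorbed into $(A(G_\omega))$) identifies this quotient with $\KK[U'\sqcup V']/I(K_{U',V'})$, and by Lemma \ref{UV} both $U'$ and $V'$ are nonempty.

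It then remains to evaluate $\depth(\KK[U'\sqcup V']/I(K_{U',V'}))$. Since $I(K_{U',V'})=(U')\cap(V')$, the short exact sequence
\[
0\to \KK[U'\sqcup V']/I(K_{U',V'})\to \KK[V']\oplus\KK[U']\to \KK\to 0
\]
together with the depth lemma forces the left-hand module to have depth $1$ (alternatively, Lemma \ref{sum1}(1) gives the same value after writing the ideal as the irrelevant ideal of $\KK[A(G_\omega)]$ plus $I(K_{U',V'})$). Hence $\depth(S/I(G_\omega)^{t_0})\leq 1$, and with the lower bound we obtain $\depth(S/I(G_\omega)^{t_0})=1$. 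Finally, for $t\geq t_0$ Lemma \ref{depth-decreasing} gives $\depth(S/I(G_\omega)^t)\leq\depth(S/I(G_\omega)^{t_0})=1$, so together with the lower bound $\depth(S/I(G_\omega)^t)=1$ for all $t\geq s(G_\omega)+1$; since the depth function is constant from $t_0$ on, $\dstab(I(G_\omega))\leq s(G_\omega)+1$.

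With Lemmas \ref{L1} and \ref{L2} available the argument above is short, so the genuine difficulty lies in those two lemmas, i.e. in pinning down the colon ideal $(I(G_\omega)^{t_0}:f(r,G_\omega))$ on the nose. The delicate point is that the exponent $t_0=|S(r,G_\omega)|+1$ must be calibrated so that multiplying $f(r,G_\omega)$ by any vertex of $A(G_\omega)$, or by any edge of $K_{U,V}$, produces an element of $I(G_\omega)^{t_0}$, while nothing extraneous is forced into the colon; this is exactly where the special-edge bookkeeping and the increasing-weight path structure of Lemmas \ref{path}, \ref{vInside}, and \ref{uvOuside} do the heavy lifting. I expect this calibration, rather than the final depth computation, to be the main obstacle.
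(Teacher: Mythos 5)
Your proposal is correct and follows essentially the same route as the paper's proof: lower bound from Lemma \ref{max-ideal}, monotonicity from Lemma \ref{depth-decreasing}, the colon identity $(I(G_\omega)^{t_0}:f(r,G_\omega))=(A(G_\omega))+I(K_{U,V})$ from Lemmas \ref{L1} and \ref{L2}, nonemptiness of $U\setminus A(G_\omega)$ and $V\setminus A(G_\omega)$ from Lemma \ref{UV}, and the depth evaluation via Lemma \ref{sum1}(1) and Lemma \ref{depth}. Your explicit remarks that the colon ideal is squarefree (hence radical, as Lemma \ref{depth} involves $\sqrt{I:f}$) and proper (so $f(r,G_\omega)\notin I(G_\omega)^{t_0}$) make precise two points the paper leaves implicit, and your short-exact-sequence computation of $\depth(\KK[U'\sqcup V']/I(K_{U',V'}))=1$ is a harmless variant of the paper's appeal to Lemma \ref{sum1}(1).
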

\begin{proof} According to Lemma \ref{max-ideal}, we have $\depth (S/I(G_\omega)^t)\geqslant 1$ for all $t\geqslant 1$. By Lemma \ref{depth-decreasing}, it suffices to show that $\depth (S/I(G_\omega)^{t_0})\leqslant 1$, where $t_0=s(G_\omega)+1$.

By Lemma \ref{sG-Equality}, there is a root $r$ of $G_\omega$ such that $s(G_\omega)=|S(r,G_\omega)|$. By Lemmas $\ref{L1}$ and $\ref{L2}$, we have 
$$(I(G_\omega)^{t_0} \colon f(r,G_\omega))= (A(G_\omega)) + I(K_{U,V})$$
where $(U,V)$ is a bipartition of $G$.
Let $U_1 = U\setminus A(G_\omega)$ and $V_1=V\setminus A(G_\omega)$. Then  $(A(G_\omega)) + I(K_{U,V})=(A(G_\omega)) + I(K_{U_1,V_1})$. By Lemma \ref{UV}, we have $U_1\ne \emptyset$ and $V_1\ne\emptyset$. Since $A(G_\omega) \sqcup U_1\sqcup V_1 = V(G)$, by Lemma \ref{sum1}(1), we have 
\begin{align*}
\depth (S/(I(G_\omega)^{t_0}\colon f(r,G_\omega)))&=\depth (S/((A(G_\omega))+I(K_{U_1,V_1})))\\
&=\depth (\KK[U_1\cup V_1]/I(K_{U_1,V_1}))=1.    
\end{align*}
By Lemma \ref{depth}, we obtain
$$\depth (S/I(G_\omega)^{t_0}) \leqslant \depth (S/(I(G_\omega)^{t_0}\colon f(r,G_\omega))) =1,$$
and the theorem follows.
\end{proof}

The following result characterizes $G_\omega$ such that $I(G_\omega)$ has a constant depth function.

\begin{Proposition}\label{constant-depth} Let $G_\omega$ be an increasing weighted tree. Then, $\depth (S/I(G_\omega)^t) = 1$ for all $t\geqslant 1$ if and only if $G_\omega$ is a strictly increasing weighted tree.
\end{Proposition}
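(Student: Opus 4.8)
The plan is to first reduce the statement to the single power $t=1$, and then prove the two implications. Since $\depth(S/I(G_\omega)^t)\ge 1$ for all $t$ by Lemma~\ref{max-ideal}, and $t\mapsto\depth(S/I(G_\omega)^t)$ is non-increasing by Lemma~\ref{depth-decreasing}, the equality $\depth(S/I(G_\omega)^t)=1$ holds for every $t\ge 1$ if and only if $\depth(S/I(G_\omega))=1$. Thus it suffices to characterize when the edge ideal itself has depth one.

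For the implication ``strictly increasing $\Rightarrow$ depth $1$'', I would argue as follows. If $G_\omega$ is strictly increasing with witnessing root $r$, then along every leaf-to-$r$ path the weights increase strictly, so no vertex has a child edge whose weight equals its parent-edge weight; hence $S(r,G_\omega)=\emptyset$ and $s(G_\omega)=0$. Theorem~\ref{theorem-depth} then yields $\depth(S/I(G_\omega)^t)=1$ for all $t\ge s(G_\omega)+1=1$.

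For the converse I would prove the contrapositive: if $G_\omega$ is increasing but not strictly increasing, then $\depth(S/I(G_\omega))\ge 2$. Write $I=I(G_\omega)$ and $n=|V(G)|$, so $G$ has $n-1$ edges and the Taylor complex of $I$ has length $n-1$ with top module $F_{n-1}=S\cdot e_{\mathcal G(I)}$ in multidegree $\lcm(\mathcal G(I))$. Call an edge $e=ab$ \emph{redundant} if both $a$ and $b$ carry another incident edge of weight $\ge\omega(e)$; equivalently $\lcm(\mathcal G(I))=\lcm(\mathcal G(I)\setminus\{(x_ax_b)^{\omega(e)}\})$. The heart of the argument is the combinatorial claim that \emph{if no edge is redundant then $G_\omega$ is strictly increasing}. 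Granting this, non-strictness produces a redundant edge $e$, whence the component of the Taylor differential $\partial(e_{\mathcal G(I)})$ along $e_{\mathcal G(I)\setminus\{(x_ax_b)^{\omega(e)}\}}$ equals $\lcm(\mathcal G(I))/\lcm(\mathcal G(I)\setminus\{(x_ax_b)^{\omega(e)}\})=1$, a unit; cancelling it deletes the rank-one module $F_{n-1}$, so $\pd(S/I)\le n-2$ and $\depth(S/I)=n-\pd(S/I)\ge 2$, as required.

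To prove the claim I would use a strict-maximum orientation. If no edge is redundant, then each edge $e=ab$ has an endpoint at which all other incident edges have weight $<\omega(e)$; orient $e$ toward one such endpoint. Because at most one edge at any vertex can be its strict maximum, every vertex has in-degree at most $1$; as there are $n-1$ edges on $n$ vertices, exactly one vertex $r$ has in-degree $0$, and the oriented tree is therefore the out-arborescence rooted at $r$ (a tree with in-degrees at most one and a single source is an out-arborescence from that source). Each non-root vertex $v$ then has a unique parent edge—its incoming edge—which is by construction the strict maximum at $v$, so every child edge at $v$ is strictly lighter than the parent edge. Reading this along a leaf-to-$r$ path $u_1\to\cdots\to u_k=r$ gives $\omega(u_iu_{i+1})<\omega(u_{i+1}u_{i+2})$ for all $i$, i.e.\ strict increase toward $r$. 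The main obstacle is exactly this claim: one must verify that the strict-maximum orientation has in-degree at most one at every vertex and hence glues into a single rooted arborescence whose parent edges recover the strict maxima. Once that is secured, both the homological cancellation above and the forward direction are routine.
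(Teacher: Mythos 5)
Your proof is correct, but your converse is a genuinely different argument from the paper's. The forward direction coincides with the paper (strictness forces $s(G_\omega)=0$, then Theorem~\ref{theorem-depth} applies), and your reduction to $t=1$ via Lemmas~\ref{max-ideal} and~\ref{depth-decreasing} is sound. For the converse, the paper argues homologically through external machinery: every $\sqrt{I(G_\omega)\colon f}$ has the form $(W)+I(H)$, so $I(G_\omega)$ is sequentially Cohen--Macaulay by Francisco--Van Tuyl and Jafari--Sabzrou; Faridi's formula then gives $\depth(S/I(G_\omega))=\min\{n-\height(P)\mid P\in\Ass(I(G_\omega))\}$, and the existence of a strong vertex cover $C$ with $|C|=n-1$ and $s(C)=0$ (from the companion paper \cite{LTZ2}) is unwound combinatorially to exhibit a root with strictly increasing weights. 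You instead stay entirely inside the Taylor-complex framework of Section~\ref{sec:regularity}: a ``redundant'' edge makes the top Taylor differential have a unit entry, and since $T_{n-1}$ has rank one, consecutive cancellation kills it, giving $\pd(S/I(G_\omega))\leqslant n-2$ and $\depth\geqslant 2$. Your key combinatorial claim --- no redundant edge implies strictly increasing --- is exactly the step you flagged, and it does hold: an edge oriented into $v$ is by construction the strict maximum among edges at $v$, two distinct edges at $v$ cannot each strictly exceed the other, so in-degrees are at most $1$; counting $n-1$ edges on $n$ vertices forces a unique source $r$, and induction outward from $r$ shows every edge points parent-to-child, so each parent edge is the strict maximum at its lower endpoint and leaf-to-$r$ paths strictly increase. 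Notably your claim is precisely a converse to Lemma~\ref{LCM-TWO} (in the codimension-one case), so together they characterize strict increase by the non-degeneracy of the lcm-lattice at the top, and your argument needs no hypothesis that $G_\omega$ is increasing. What each approach buys: yours is self-contained, elementary, and thematically unifies Proposition~\ref{constant-depth} with Theorem~\ref{minimal}; the paper's route yields by-products of independent interest (sequential Cohen--Macaulayness of $I(G_\omega)$ and the link to strong vertex covers and $\Ass(I(G_\omega)^t)$) that feed into its broader associated-primes program.
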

\begin{proof} Suppose that $G_\omega$ is a strictly increasing weighted tree with a root $r$. Then, $s(G_\omega)=0$. According to Theorem \ref{theorem-depth},  $\depth (S/I(G_\omega)^t) = 1$ for all $t\geqslant 1$.

Now, suppose $\depth (S/I(G_\omega)^t) = 1$ for all $t\geqslant 1$. We will show that $G_\omega$ is a strictly increasing weighted tree. For every monomial $f\notin I(G_\omega)$,  observe that  $\sqrt{I(G_\omega)\colon f}$ is of the form $(W) + I(H)$, where $W\subseteq V(G)$ and $H$ is a subgraph of $G$ on the set $V(G)\setminus W$. Thus, $\sqrt{I(G_\omega)\colon f}$ is sequentially Cohen-Macaulay due to \cite[Theorem 1.2]{FV}. According to \cite[Proposition 2.23]{JS}, $I(G_\omega)$ is sequentially Cohen-Macaulay. Hence, by \cite[Theorem 4]{F}, we have
$$\depth (S/I(G_\omega)) = \min\{n-\height(P) \mid P\in \Ass(I(G_\omega))\}.$$
Together with \cite[Theorem 2.10]{LTZ2}, it implies that there exists a strong vertex cover $C$ such that $P=(C)$,  $|C|=n-1$ and $s(C)=0$.
Let $V(G) \setminus C = \{u\}$. For any simple path  in $G$ from a leaf $v_1$ to $u$
$$ v_1\to v_2\to \cdots \to v_{k-1}\to v_{k}=u,$$
with $k\geqslant 3$, we have that  $v_{k-1}\in N_G(u)$, $v_1,\ldots,v_{k-2}\notin N_G(u)$. 

Note that $\omega(v_{k-2}v_{k-1})< \omega(v_{k-1}u)$ as $C$ is a  strong vertex cover. On the other hand, since $s(C)=0$, $v_j$ is not special for every $j=2,\ldots,k-2$. Together with \cite[Lemma 1.9]{LTZ2}, it gives $\omega(v_{j-1}v_{j}) < \omega(v_{j}v_{j+1})$ for $j=2,\ldots,k-2$. Therefore, the weight $\omega$ on our path is strictly increasing, and therefore $(G_\omega,u)$ is a strictly increasing weighted tree.
\end{proof}

\medskip
We will conclude this section by presenting an example which demonstrates that if $G_\omega$ is a weighted tree but it is not an increasing weighted tree, then the stable value of the depth function of $I(G_\omega)$ may be any integer.

\begin{Example} {\em (\cite[Theorem 4.10]{LTZ})} For any  positive integer $d$, there is a weighted tree $G_\omega$ with $2d$ vertices such that $\dim (S/I(G_\omega))=d$ and $I(G_\omega)^t$ is Cohen-Macaulay for all $t\geqslant 1$. In particular,
$$\depth (S/I(G_\omega)^t) = d  \ \text{ for all } t\geqslant 1.$$
\end{Example}

\section{Regularity of powers of the edge ideal of weighted trees}
\label{sec:regularity}

In the previous section, we saw that if $I(G_\omega)$ is the edge ideal of a strictly increasing weighted tree $G_\omega$, then $\pd (S/I(G_\omega)) = n-1$. In this section, we explore  the homological aspects of such an ideal in more detail. We will assume that $(G_\omega,r)$ is a strictly increasing weighted tree with the vertex set  $V(G)=\{x_1,\ldots,x_n\}$. First, we find the minimal free resolution for  $I(G_\omega)$, and then we will compute the regularity of powers of this ideal.

We will start with the definition of the Taylor complex. Let $M = \{m_1,\ldots,m_k\}$ be a set of monomials of $S$,   and let $U$ be a subset of  $\{1,\ldots,k\}$, let
$$m_U= \lcm(m_i\mid i\in U)$$ be  the least common multiple of the set $\{m_i\mid i\in U\}$.

Let $a_U\in\NN^n$ be the exponent vector of  $m_U$, and let $S(-a_U)$ be the free module generated by the homogeneous element $e_U$ of degree $a_U$. Then, the Taylor complex $\mathbf{T} = \mathbf{T}(m_1,\ldots,m_k)$ is defined by
$$0 \longrightarrow T_k\longrightarrow T_{k-1}\longrightarrow\cdots\longrightarrow T_1\longrightarrow T_0$$
where
$$T_p = \bigoplus_{|U|=p}S(-a_U)$$
and the differential $d_p\colon T_p \to T_{p-1}$ is given by
$$d_p(e_U) = \sum_{i\in U} \sign(i,U)\frac{m_U}{m_{U\setminus i}}e_{U\setminus i}$$
where $\sign(i,U)$ is $(-1)^{j-1}$ if $i$ is the $j$-th element in the increasing ordering of $U$. It is well known that the Taylor complex $\mathbf T(m_1,\ldots,m_k)$ is a free resolution of $S/(M)$ (see \cite[Theorem 26.7]{P}) but it may not be  minimal (see \cite[Example 26.8]{P}).

Since $G$ is a tree, $|E(G)| = n-1$, therefore,  $\mathcal G(I(G_\omega)) = \{f_1,\ldots,f_{n-1}\}$. We will show that the Taylor complex $\mathbf{T} = \mathbf{T}(f_1,\ldots,f_{n-1})$ of $I(G_\omega)$ is a minimal free resolution of $S/I(G_\omega)$. Therefore, it provides an alternative  explanation for $\pd(S/I(G_\omega))=n-1$.

\begin{Lemma}\label{LCM-TWO} Let $(G_\omega,r)$ be a strictly increasing weighted tree. Then, for all subsets $W_1,W_2\subseteq \mathcal{G}(I(G_\omega))$ such that $W_1\subseteq W_2$ and $W_1\ne W_2$, we have 
 $\lcm(W_1) \ne \lcm(W_2)$.
\end{Lemma}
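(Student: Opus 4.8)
The plan is to show that enlarging a subset $W_1 \subsetneq W_2$ of generators strictly increases the least common multiple, by exhibiting a variable whose exponent in $\lcm(W_2)$ exceeds its exponent in $\lcm(W_1)$. Since $W_1 \ne W_2$, pick a generator $f = (xy)^{\omega(xy)} \in W_2 \setminus W_1$ corresponding to an edge $xy \in E(G)$. The key structural fact I want to exploit is that, because $G$ is a \emph{strictly} increasing weighted tree, the weights along any path to the root $r$ increase strictly, so each edge carries a weight that is in a precise sense ``locally maximal'' relative to its children. This should let me identify, for the edge $xy$, an endpoint variable whose full exponent $\omega(xy)$ is \emph{not} already contributed by any generator in $W_1$.

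Concretely, I would orient each edge toward the root and, for the chosen $f = (xy)^{\omega(xy)}$, take $y$ to be the endpoint of $xy$ that is farther from $r$ (the ``child''). I claim $\deg_y(\lcm(W_2)) = \omega(xy) > \deg_y(\lcm(W_1))$. For the first equality: the edges of $G$ incident to $y$ are $xy$ together with edges $yz$ where $z$ is a child of $y$; since the tree is strictly increasing, $\omega(yz) < \omega(xy)$ for every such child-edge, so among all generators involving $y$ the maximal $y$-exponent is exactly $\omega(xy)$, attained only by $f$ itself. Hence $\deg_y$ of the lcm of \emph{any} set of generators is at most $\omega(xy)$, with equality iff $f$ is in the set. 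Since $f \in W_2$ but $f \notin W_1$ and $f$ is the unique generator achieving $y$-exponent $\omega(xy)$, we get $\deg_y(\lcm(W_2)) = \omega(xy)$ while $\deg_y(\lcm(W_1)) < \omega(xy)$, forcing $\lcm(W_1) \ne \lcm(W_2)$.

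The main obstacle, and the step requiring care, is justifying that $f$ is the \emph{unique} generator with $y$-exponent equal to $\omega(xy)$ — that is, ruling out another incident edge $yz$ with $\omega(yz) \geq \omega(xy)$. This is where strict increase is essential (as opposed to the merely increasing case, where ties occur at special vertices and the conclusion can genuinely fail). I would use Lemma \ref{path}-type structure: relative to the root $r$, the vertex $y$ has at most one non-leaf neighbor on the path toward $r$, namely $x$, and every edge from $y$ to a vertex strictly farther from $r$ lies on a simple path to $r$ passing through $xy$, so strict monotonicity gives $\omega(yz) < \omega(xy)$. A subtlety is that $xy$ might be oriented so that $y$ is the parent rather than the child; to handle this cleanly I would fix at the outset a choice of farther-from-root endpoint for $f$ and verify that for \emph{that} endpoint the weight $\omega(xy)$ strictly dominates all other incident edge-weights, which holds because every competing incident edge is a strictly earlier edge on some leaf-to-root path through the chosen endpoint.
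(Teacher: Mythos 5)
Your proof is correct and takes essentially the same route as the paper's: both single out the endpoint of the missing edge $uv$ that is farther from the root and use strict monotonicity to conclude $\omega(uv)>\omega(vw)$ for every other edge $vw$ incident to that endpoint, so the exponent of that vertex in the lcm strictly jumps. The paper merely packages this differently --- first reducing to $W_2=W_1\cup\{f\}$ and splitting $G$ into the two subtrees on either side of $uv$ to isolate $\deg_v(\lcm(W_1))$ --- whereas you argue directly that $f$ is the unique generator attaining the maximal $v$-exponent; the decisive inequality is identical.
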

\begin{proof}   It suffices to consider the case $W_1=\{f_1,\ldots,f_k\}$ and $W_2=\{f_1,\ldots,f_k,f\}$, where $f_1,\ldots,f_k,f$ are distinct monomials in $\mathcal{G}(I(G_\omega))$.
Let  $f=(uv)^{\omega(uv)}$. If either $u$ or $v$ is not in $\supp(W_1)$, then the statement holds. Therefore, we assume that $u,v\in \supp(W_1)$. There is a simple path from the root $r$ of the form
$$r = u_1\to u_2\to\cdots\to u_{k-1}\to u_k = u \to v.$$
Let $G'$ and $G''$ be the two disjoint subtrees obtained by deleting the edge $uv$ from $G$, where $u\in V(G')$ and $v\in V(G'')$.
Then, $(G'_\omega,r)$ and $(G''_\omega,v)$ are two strictly increasing weighted trees. Let 
$$W' = W_1\cap \mathcal G(I(G'_\omega)) \text{ and } W'' = W_1\cap \mathcal G(I(G''_\omega)).$$
Then, $\lcm(W_1) = \lcm(W') \lcm(W'')$ since $\supp(W')\cap \supp(W'')=\emptyset$. Because $v\notin \supp(W')$, we have
$$\deg_v(\lcm(W_1))=\deg_v(\lcm(W'')).$$
Note that $\omega(uv) > \max\{\omega(vw)\mid w\in N_{G''}(v)\}$ since $(G_\omega,r)$ is a strictly increasing weighted tree. It follows that $\deg_v(\lcm(W_2)) \geqslant\deg_v(f)= \omega(uv) > \deg_v(\lcm(W_1))$, so $\lcm(W_1)\ne \lcm(W_2)$, as required.
\end{proof}

Now we are ready to prove the first main result of this section.

\begin{Theorem} \label{minimal} Let $G_\omega$ be a strictly increasing weighted tree. Then, the Taylor complex $\mathbf T$ of $I(G_\omega)$ is the minimal free resolution of $S/I(G_\omega)$.
\end{Theorem}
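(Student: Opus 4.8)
The plan is to invoke the standard minimality criterion for graded free resolutions: a complex of free graded $S$-modules with differentials $d_p$ is a \emph{minimal} free resolution precisely when every matrix representing a $d_p$ has all of its entries in the graded maximal ideal $\mm$ (equivalently, no nonzero scalar entries). Since $\mathbf{T}(f_1,\ldots,f_{n-1})$ is already a free resolution of $S/I(G_\omega)$ by \cite[Theorem 26.7]{P}, the only thing left to verify is that no entry of any differential is a unit.

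First I would read off the entries explicitly from the formula for the differential. For a subset $U\subseteq\{1,\ldots,n-1\}$ and an index $i\in U$, the coefficient of $e_{U\setminus i}$ in $d_p(e_U)$ is $\sign(i,U)\,m_U/m_{U\setminus i}$. Because $U\setminus i\subseteq U$, the monomial $m_{U\setminus i}=\lcm(f_j\mid j\in U\setminus i)$ divides $m_U=\lcm(f_j\mid j\in U)$, so $m_U/m_{U\setminus i}$ is an honest monomial. This quotient is a unit if and only if it equals $1$, that is, if and only if $m_U=m_{U\setminus i}$. Thus minimality reduces to the assertion that the least common multiple strictly increases whenever a generator is added.

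The second step is to rule out the equality $m_U=m_{U\setminus i}$ by applying Lemma \ref{LCM-TWO}. Setting $W_2=\{f_j\mid j\in U\}$ and $W_1=\{f_j\mid j\in U\setminus i\}$, we have $W_1\subsetneq W_2$, so Lemma \ref{LCM-TWO} yields $\lcm(W_1)\neq\lcm(W_2)$, i.e. $m_{U\setminus i}\neq m_U$. Hence $m_U/m_{U\setminus i}$ is a non-constant monomial, which lies in $\mm$. As $p$, $U$, and $i\in U$ were arbitrary, every entry of every differential of $\mathbf{T}$ lies in $\mm$, and therefore the resolution is minimal.

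The genuinely substantive input is Lemma \ref{LCM-TWO} itself, where the strict growth of the weights along the paths to the root is what forces the least common multiple to grow strictly as generators are adjoined. Given that lemma, the passage to minimality is purely formal, so I do not expect a real obstacle beyond correctly invoking the minimality criterion and matching the lcm condition of Lemma \ref{LCM-TWO} to the non-vanishing (in $\mm$) of the Taylor differential entries.
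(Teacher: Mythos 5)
Your proposal is correct and follows essentially the same route as the paper's own proof: both cite \cite[Theorem 26.7]{P} for exactness and then reduce minimality to the condition $d_p(T_p)\subseteq \mm T_{p-1}$, which is verified by applying Lemma \ref{LCM-TWO} to the pair $W_1=\{f_j\mid j\in U\setminus i\}\subsetneq W_2=\{f_j\mid j\in U\}$ to conclude $m_U\neq m_{U\setminus i}$ and hence $\deg(m_U/m_{U\setminus i})\geqslant 1$. No gaps; your write-up just makes explicit the standard minimality criterion that the paper uses implicitly.
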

\begin{proof} According to \cite[Theorem 26.7]{P},  $\mathbf{T}$ is a free resolution of $S/I(G_\omega)$. It remains to prove that the differential $d_p \colon T_p \to T_{p-1}$ satisfies $d_p(T_p) \subseteq \mm T_{p-1}$ for all $p\in[n-1]$. For each $U\subseteq \{1,\ldots,n-1\}$ with $|U| = p$, we have
$$d_p(e_U) = \sum_{i\in U} \sign(i,U)\frac{m_U}{m_{U\setminus i}}e_{U\setminus i}.$$
By Lemma \ref{LCM-TWO},  $m_U \ne m_{U\setminus i}$ for all $i\in U$, so $\deg(m_U/m_{U\setminus i})\geqslant 1$. Therefore,  $d_p(e_U) \in\mm T_{p-1}$, and the result follows.
\end{proof}

\begin{Corollary}\label{totalBetti} Let $G_\omega$ be a strictly increasing weighted tree. Then, 
$$\beta_i(S/I(G_\omega)) = \binom{n-1}{i} \ \text{ for every }i=0,\ldots,n-1.$$
\end{Corollary}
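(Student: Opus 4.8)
The plan is to read off the total Betti numbers directly from the minimal free resolution just established. By Theorem~\ref{minimal}, the Taylor complex $\mathbf{T} = \mathbf{T}(f_1,\ldots,f_{n-1})$ is the minimal free resolution of $S/I(G_\omega)$. Consequently, the $i$-th Betti number $\beta_i(S/I(G_\omega))$ equals the rank of the free module $T_i$ in the Taylor complex, since in a minimal resolution every basis element of each free module contributes exactly one to the corresponding Betti number (there is no cancellation).

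First I would recall the definition of the modules in the Taylor complex: for each $i$, we have
$$T_i = \bigoplus_{\substack{U\subseteq\{1,\ldots,n-1\}\\ |U|=i}} S(-a_U),$$
so that $T_i$ is a free module whose rank is the number of $i$-element subsets $U$ of the index set $\{1,\ldots,n-1\}$. Since $\mathcal{G}(I(G_\omega)) = \{f_1,\ldots,f_{n-1}\}$ has exactly $n-1$ generators (as $G$ is a tree and hence $|E(G)| = n-1$), the number of such subsets is precisely $\binom{n-1}{i}$.

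The conclusion then follows immediately: because $\mathbf{T}$ is minimal, we have
$$\beta_i(S/I(G_\omega)) = \rank T_i = \binom{n-1}{i} \quad\text{for every } i = 0,\ldots,n-1,$$
which is the desired formula. I do not anticipate any genuine obstacle here, as the result is a formal consequence of the minimality statement in Theorem~\ref{minimal}; the only point requiring care is to note that minimality is exactly what guarantees $\beta_i$ equals the rank of $T_i$ rather than merely being bounded above by it. The case $i=0$ gives $\beta_0 = \binom{n-1}{0} = 1$, consistent with $S/I(G_\omega)$ being cyclic, and the top case $i = n-1$ gives $\beta_{n-1} = 1$, consistent with $\pd(S/I(G_\omega)) = n-1$ as observed earlier.
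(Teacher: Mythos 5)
Your proposal is correct and follows exactly the paper's own argument: both invoke Theorem~\ref{minimal} to identify $\beta_i(S/I(G_\omega))$ with $\rank T_i$, and then count the $i$-element subsets of the $n-1$ minimal generators to get $\binom{n-1}{i}$. Your added remarks on why minimality forces equality (rather than just an upper bound) and the sanity checks at $i=0$ and $i=n-1$ are accurate but not needed beyond what the paper states.
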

\begin{proof} Let $I = I(G_\omega)$ and $\mathcal G(I)=\{f_1,\ldots,f_{n-1}\}$. Since the Taylor complex $\mathbf T = \mathbf T(f_1,\ldots,f_{n-1})$ is a minimal free resolution of $S/I$, thus, for any $i=0,\ldots,n-1$,
$$\beta_i(S/I) = \rank (T_i) = |\{W\mid  W\subseteq \mathcal G(I) \text { and } |W| = i\}|=\binom{n-1}{i},$$
as required.
\end{proof}

\begin{Lemma}\label{LCM} Let $G_\omega$ be an increasing weighted tree. Then, 
$$\deg(\lcm(\mathcal{G}(I(G_\omega)))) = d+\sum_{e\in E(G)} \omega(e),$$
where  $d = \max\{\omega(e)\mid e\in E(G_\omega)\}$.
\end{Lemma}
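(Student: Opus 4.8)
The plan is to express the degree of the lcm directly in terms of the quantities $\mu(v)$ from Definition \ref{notation}, and then to reorganize that sum along the tree rooted at $r$. Since $\mathcal{G}(I(G_\omega)) = \{(uv)^{\omega(uv)} \mid uv \in E(G)\}$, the exponent of a variable $v$ in $\lcm(\mathcal{G}(I(G_\omega)))$ is the largest exponent of $v$ occurring among the generators, that is $\max\{\omega(uv) \mid u \in N_G(v)\} = \mu(v)$. Hence $\lcm(\mathcal{G}(I(G_\omega))) = \prod_{v \in V(G)} v^{\mu(v)}$ and
$$\deg(\lcm(\mathcal{G}(I(G_\omega)))) = \sum_{v \in V(G)} \mu(v).$$
It therefore suffices to prove $\sum_{v \in V(G)} \mu(v) = d + \sum_{e \in E(G)} \omega(e)$.

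Next I would use the root $r$ to set up a bijection between the non-root vertices and the edges of $G$: each $v \neq r$ is matched with its unique parent edge $e_v$, namely the first edge on the simple path from $v$ to $r$. Since $G$ is a tree this is a bijection between $V(G) \setminus \{r\}$ and $E(G)$. The crucial point is that $\mu(v) = \omega(e_v)$ for every non-root $v$: the neighbours of $v$ are its parent and its children, and for any child $c$, extending $c$ to a leaf below it produces a leaf-to-root path through the edges $cv$ and $e_v$, on which the defining inequality of an increasing weighted tree forces $\omega(cv) \leqslant \omega(e_v)$. Thus the parent edge realizes the maximum defining $\mu(v)$, and consequently $\sum_{v \neq r} \mu(v) = \sum_{e \in E(G)} \omega(e)$.

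The last and most delicate step is to show $\mu(r) = d$. One inequality is immediate, since $\mu(r) \leqslant d$ by definition of $d$. For the reverse inequality I would take an edge $uv$ of maximal weight $d$, oriented so that $v$ is the parent of $u$, and follow the increasing leaf-to-root path through $uv$, extending $u$ downward to a leaf if $u$ is not already one. Along this path the weights are non-decreasing and begin at $\omega(uv) = d$, while $d$ is the global maximum; hence every subsequent edge, in particular the final edge incident to $r$, also has weight $d$, giving $\mu(r) \geqslant d$.

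Combining the three displays then yields
$$\sum_{v \in V(G)} \mu(v) = \mu(r) + \sum_{v \neq r} \mu(v) = d + \sum_{e \in E(G)} \omega(e),$$
which is the claimed formula. The only genuine obstacle is the propagation argument establishing $\mu(r) = d$; everything else is bookkeeping once the identity $\lcm(\mathcal{G}(I(G_\omega))) = \prod_{v} v^{\mu(v)}$ and the parent-edge bijection are in place.
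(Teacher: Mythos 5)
Your proof is correct, but it takes a genuinely different route from the paper's. The paper argues by induction on $|V(G)|$: using Lemma \ref{path} it locates a pendant edge $v_{k-1}v_k$ whose weight is minimal among the edges at $v_{k-1}$, so that $\lcm(\mathcal{G}(I(G_\omega)))=\lcm(\mathcal{G}(I(G'_\omega)))\,v_k^{\omega(v_{k-1}v_k)}$ for $G'=G\setminus v_k$, and then applies the induction hypothesis (implicitly using that deleting $v_k$ does not change the maximal weight $d$, which holds since $\omega(v_{k-1}v_k)\leqslant\omega(v_{k-2}v_{k-1})$). You instead compute the lcm in closed form: the exponent of $v$ is $\mu(v)$ from Definition \ref{notation}, so the degree is $\sum_{v}\mu(v)$; the parent-edge bijection between $V(G)\setminus\{r\}$ and $E(G)$, together with the observation that the increasing condition along any leaf-to-root path through a child edge $cv$ and the parent edge $e_v$ forces $\mu(v)=\omega(e_v)$, accounts for $\sum_{e\in E(G)}\omega(e)$; and the propagation argument that weights stay equal to $d$ along an increasing path once the maximum is attained gives $\mu(r)=d$. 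Your approach buys more: it is non-inductive, it identifies the actual monomial $\lcm(\mathcal{G}(I(G_\omega)))=\prod_{v\in V(G)}v^{\mu(v)}$ rather than only its degree, and it connects the formula to the quantity $\mu(v)$ used elsewhere in the paper (e.g.\ in $f(r,G_\omega)$); your $\mu(r)=d$ propagation is essentially the same monotonicity fact that underlies Lemma \ref{path}, so no new machinery is smuggled in. The paper's inductive peeling, on the other hand, matches the template of its other arguments (Lemma \ref{L2}, Theorem \ref{reg-power}) and so keeps the exposition uniform. One small point worth making explicit in your write-up: when $c$ is a child of $v$ that is not a leaf, the descent from $c$ away from $v$ terminates at a vertex of degree one in $G$ (not merely in the subtree), which is what licenses applying the increasing condition to the resulting leaf-to-root path; this is immediate in a tree but deserves a sentence.
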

\begin{proof}
Let $I=I(G_\omega)$ and $n=|V(G)|$. We will prove the statement by induction on $n$. The case $n=2$ is trivial. Now, assume that $n\geqslant 3$. If $G_\omega$ is a star graph with a center $r$, then $I = ((rv)^{\omega(rv)}\mid v\in N_G(r)\}$.  Therefore, $\lcm(\mathcal{G}(I)) = r^d\prod\limits_{v\in N_G(r)} v^{\omega(rv)}$. Therefore,
$\deg(\lcm(\mathcal{G}(I))) = d + \sum_{v\in N_G(r)} \omega(rv)=d + \sum_{e\in E(G)} \omega(e)$.

In the following, we assume that $G$ is not a star graph. Let $r$ be a root of $G_\omega$. By Lemma \ref{path}, there is a longest path from $r$ in the form
$$r = v_0 \to v_1\to v_2\cdots \to v_{k-1}\to v_k,$$
where $k\geqslant 2$, $v_k$ is a leaf and $\omega(v_kv_{k-1}) \leqslant \omega(v_{k-1}v)$ for all $v\in N_G(v_{k-1})$. 
Let $I'=I(G'_\omega)$, where $G'=G\setminus v_k$. Then, 
$$\lcm(\mathcal{G}(I)) = \lcm(\mathcal{G}(I')\cup\{(v_{k-1}v_k)^{\omega(v_{k-1}v_k)}\}).$$ 
 Since $\omega(v_kv_{k-1}) \leqslant \omega(v_{k-1}v)$ for all $v\in N_G(v_{k-1})$, 
$$\lcm(\mathcal{G}(I')\cup\{(v_{k-1}v_k)^{\omega(v_{k-1}v_k)}\}) = \lcm(\mathcal{G}(I'))v_k^{\omega(v_{k-1}v_k)}.$$ 
By the induction hypothesis, we can get
$$\deg(\lcm(\mathcal{G}(I')))=d+\sum_{e\in E(G')}\omega(e).$$
Therefore, 
$$\deg(\lcm(\mathcal{G}(I))) = \deg(\lcm(\mathcal{G}(I')))+\omega(v_{k-1}v_k) =d+\sum_{e\in E(G)}\omega(e),$$
as required.
\end{proof}

The following result provides  the formula for $\reg I(G_\omega)$.

\begin{Proposition} \label{regI} Let $G_\omega$ be a strictly increasing weighted tree. Then, 
$$\reg(I(G_\omega)) = d+\sum_{e\in E(G)}(\omega(e)-1)+1,$$ 
where $d=\max\{\omega(e)\mid e\in E(G)\}$.
\end{Proposition}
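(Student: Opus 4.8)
The plan is to compute $\reg(I(G_\omega))$ directly from the minimal free resolution furnished by Theorem \ref{minimal}, namely the Taylor complex $\mathbf{T}(f_1,\ldots,f_{n-1})$. Since this complex is minimal, we have $\beta_{i,j}(S/I(G_\omega)) \ne 0$ precisely when $j = \deg(m_U)$ for some subset $U \subseteq \{1,\ldots,n-1\}$ with $|U| = i$, where $m_U = \lcm(f_\ell \mid \ell \in U)$. Consequently,
\begin{align*}
\reg(S/I(G_\omega)) &= \max\{\deg(m_U) - |U| \mid U \subseteq \{1,\ldots,n-1\}\},
\end{align*}
and $\reg(I(G_\omega)) = \reg(S/I(G_\omega)) + 1$.

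First I would argue that the maximum of $\deg(m_U) - |U|$ is attained at the full set $U = \{1,\ldots,n-1\}$, for which $m_U = \lcm(\mathcal{G}(I(G_\omega)))$. The intuition is that each time we enlarge $U$ by adjoining one more generator $f = (uv)^{\omega(uv)}$, the quantity $\deg(m_U)$ increases by at least one more than the corresponding increase in $|U|$; more precisely, adding a generator raises $|U|$ by exactly $1$ but raises $\deg(m_U)$ by at least $1$ (and typically by $\omega(uv) \geq 1$) thanks to the strict-increase hypothesis. The key technical input here is Lemma \ref{LCM-TWO}: for $W_1 \subsetneq W_2$ we always have $\lcm(W_1) \ne \lcm(W_2)$, and in fact the proof of that lemma shows $\deg_v(\lcm(W_2)) \geq \omega(uv) > \deg_v(\lcm(W_1))$ for a suitably chosen pendant variable $v$, so the degree jump is genuinely positive. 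I would formalize the monotonicity claim by induction, adjoining generators one at a time along a leaf-to-root ordering and checking that $\deg(m_U) - |U|$ never decreases, so that the global maximum sits at the top of the complex.

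Once this reduction is in place, the formula follows by combining two earlier computations. Lemma \ref{LCM} gives $\deg(\lcm(\mathcal{G}(I(G_\omega)))) = d + \sum_{e \in E(G)} \omega(e)$, and since $G$ is a tree we have $|E(G)| = n-1$, so $|U| = n-1 = |E(G)|$. Therefore
\begin{align*}
\reg(S/I(G_\omega)) &= \Big(d + \sum_{e \in E(G)} \omega(e)\Big) - |E(G)| = d + \sum_{e \in E(G)}(\omega(e) - 1),
\end{align*}
whence $\reg(I(G_\omega)) = \reg(S/I(G_\omega)) + 1 = d + \sum_{e \in E(G)}(\omega(e)-1) + 1$, as claimed.

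The main obstacle I anticipate is rigorously establishing that the full set maximizes $\deg(m_U) - |U|$, rather than merely noting that its $\lcm$ has the largest degree. In principle a proper subset $U$ could have a smaller $|U|$ that more than compensates for a smaller $\deg(m_U)$, so I must verify the per-step inequality carefully. The strict-increase condition is exactly what prevents degenerate cancellation: when passing from $G$ to $G \setminus v_k$ along the longest path of Lemma \ref{path}, the pendant variable $v_k$ contributes $\omega(v_{k-1}v_k) \geq 1$ to the $\lcm$ while the generator count rises by one, keeping $\deg(m_U) - |U|$ nondecreasing. I would structure this as the same leaf-stripping induction used in Lemma \ref{LCM}, which should make the bookkeeping transparent and let the two lemmas interlock cleanly.
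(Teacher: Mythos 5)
Your proof is correct and is essentially the paper's own argument: both extract $\reg(S/I(G_\omega))=\max\{\deg(m_U)-|U|\}$ from the minimal Taylor resolution of Theorem \ref{minimal}, use Lemma \ref{LCM-TWO} to show each adjoined generator raises the lcm degree by at least one (so the full set $\mathcal{G}(I(G_\omega))$ attains the maximum, i.e.\ $\deg(\lcm(\mathcal{G}(I)))\geqslant \deg(\lcm(W))+(n-1-|W|)$), and then conclude via Lemma \ref{LCM}. The only cosmetic difference is that your leaf-to-root ordering and leaf-stripping induction are unnecessary: since Lemma \ref{LCM-TWO} applies to \emph{arbitrary} nested subsets $W_1\subsetneq W_2$, any chain of subsets from $W$ up to $\mathcal{G}(I(G_\omega))$ yields the per-step degree increment directly, which is exactly how the paper states the inequality in one line.
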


\begin{proof} Let $I = I(G_\omega)$  and $n=|V(G)|$.  For every subset $W\subseteq \mathcal G(I)$, by Lemma \ref{LCM-TWO}, we have
$$\deg(\lcm(\mathcal{G}(I))) \geqslant \deg(\lcm(W)) + (n-1 - |W|).$$
According to  Theorem \ref{minimal},  $\reg (I) = \deg(\lcm(\mathcal{G}(I)))-(n-1)+1$. Combining with the fact that $|E(G)|=n-1$ and Lemma \ref{LCM}, we obtain
$$\reg (I) = d + \sum_{e\in E(G)}\omega(e) -(n-1)+1 = d + \sum_{e\in E(G)}(\omega(e)-1)+1,$$
as required.\end{proof}

The next our goal is to compute $\reg I(G_\omega)^t$ for which we start with the basic case.  
\begin{Lemma} {\em (\cite[Theorem 3.3(2)]{ZDCL})} \label{star} Let $G_\omega$ be a weighted star graph. Then,
$$\reg(I(G_\omega)^t)=2d(t-1) + \reg (I(G_\omega)) \text{ for all } t\geqslant 1,$$
where $d=\max\{\omega(e)\mid e\in E(G)\}$.
\end{Lemma}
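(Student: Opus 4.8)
The plan is to fix the center $x$ of the star, write its leaves as $y_1,\dots,y_m$ with weights $a_i=\omega(xy_i)$, and (after relabelling) assume $a_m=d=\max_i a_i$. Set $I=I(G_\omega)=((xy_i)^{a_i}\mid i\in[m])$, let $g=(xy_m)^d$ be the generator of maximal weight, and let $J=((xy_i)^{a_i}\mid i\in[m-1])$ be the edge ideal of the substar obtained by deleting the leaf $y_m$; note $J$ is again a weighted star whose maximal weight $d'$ satisfies $d'\le d$. I would argue by induction on $t$, the base case $t=1$ being exactly Proposition~\ref{regI}. For the inductive step I use the short exact sequence
$$0\longrightarrow \bigl(S/(I^t:g)\bigr)(-2d)\xrightarrow{\;\cdot g\;} S/I^t\longrightarrow S/(I^t+(g))\longrightarrow 0,$$
so that by Lemma~\ref{exact} the regularity of $S/I^t$ is controlled by the two outer terms, the left-hand one carrying the degree shift $2d$ coming from the \emph{maximal} weight edge.

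The two quotients appearing at the ends can be identified explicitly. For the colon, a direct divisibility analysis of the generators $M_{\mathbf b}=x^{\sum a_ib_i}\prod_i y_i^{a_ib_i}$ (with $\sum b_i=t$) of $I^t$ shows that those with $b_m\ge 1$ contribute exactly $\mathcal G(I^{t-1})$ after dividing by $g$, while those with $b_m=0$ contribute $(J^t:x^d)$; hence $(I^t:g)=I^{t-1}+(J^t:x^d)$. For the other end, every $M_{\mathbf b}$ with $b_m\ge 1$ is divisible by $g$, so it becomes redundant modulo $g$ and $I^t+(g)=J^t+(g)$, an ideal in which the leaf variable $y_m$ occurs only in $g$. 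The first key claim is then $\reg(S/(I^t:g))=\reg(S/I^{t-1})$: the extra generators $(J^t:x^d)$ have strictly smaller $y$-degrees and do not create syzygies of higher shift, which I would verify by peeling them off through an auxiliary short exact sequence and comparing with the (known, by induction) resolution of $S/I^{t-1}$. Granting this, the left term has regularity $\reg(S/I^{t-1})+2d=2d(t-1)+\reg(S/I)$ by the inductive hypothesis, which is the target value.

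The remaining ingredient is the bound $\reg(S/(J^t+(g)))<2d(t-1)+\reg(S/I)-1$, guaranteeing that the boundary hypothesis $\reg(P)\ne\reg(M)-1$ of Lemma~\ref{exact} holds and upgrades the inequality to the equality $\reg(S/I^t)=2d(t-1)+\reg(S/I)$. This I would obtain by a nested induction on the number of leaves $m$: applying the same colon sequence to $J^t+(g)$ reduces $\reg(S/(J^t+(g)))$ to $\reg(S/J^t)$ and to $\reg(S/(J^t:x^d))+2d-1$, and since the substar $J$ has maximal weight $d'\le d$ and strictly fewer edges, the inductive formula $\reg(S/J^t)=2d'(t-1)+\reg(S/J)$ keeps every contribution strictly below the threshold. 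The main obstacle, and the part requiring the most care, is precisely the first key claim, namely showing that the auxiliary colon term $(J^t:x^d)$ affects neither the value of $\reg(S/(I^t:g))$ nor, through the boundary term, the final answer; equivalently, proving that the maximal-weight arm alone governs the top of the minimal free resolution of $S/I^t$. Once this is secured, Lemma~\ref{exact} (together with Lemma~\ref{sum1} to split off the disjoint-variable pieces of the boundary quotients) yields both the upper bound and the matching lower bound, completing the induction.
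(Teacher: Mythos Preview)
The paper does not give its own proof of this lemma; it is quoted verbatim from \cite[Theorem 3.3(2)]{ZDCL} and used only as the base case in Theorem~\ref{reg-power}. So there is no argument in the paper to compare your sketch against.

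Your sketch itself has a genuine gap, which you flag but do not close: the ``first key claim'' that $\reg(S/(I^t:g))=\reg(S/I^{t-1})$. Your identification $(I^t:g)=I^{t-1}+(J^t:x^d)$ is correct, but the extra summand $(J^t:x^d)$ is \emph{not} contained in $I^{t-1}$ in general (e.g.\ with two leaves of weights $1$ and $2$ and $t=2$ one has $(J^2:x^2)=(y_1^2)\not\subset I$), so the claim requires a real argument; the phrase ``peeling them off through an auxiliary short exact sequence'' does not supply one. The boundary term is similarly unfinished: since $g=x^dy_m^d$ shares the variable $x$ with $J$, Lemma~\ref{sum1} does not split $S/(J^t+(g))$, and your proposed nested induction on $m$ runs into the same unproved key claim at every step.

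The choice to colon by the \emph{maximal}-weight generator is what creates this difficulty. The device used throughout the present paper (Lemma~\ref{colon}, Lemma~\ref{depth-decreasing}, and the proof of Theorem~\ref{reg-power}) is to colon by a \emph{minimal}-weight pendant edge, which gives $(I^t:(xy_1)^{a_1})=I^{t-1}$ exactly, with no extra term. The trade-off is that the shift is then $2a_1<2d$, so the colon side only gives a strict upper bound; equality must come from the sum side via a second short exact sequence (coloning by the leaf variable alone), exactly as in the proof of Theorem~\ref{reg-power}. Adapting that two-sequence argument to the star directly is the natural route, and is presumably what is done in \cite{ZDCL}.
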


We are now ready to prove second main result of this section.

\begin{Theorem}\label{reg-power} Let $G_\omega$ be a strictly increasing weighted tree. Then
$$\reg(I(G_\omega)^t)=2d(t-1) + \reg (I(G_\omega)) \text{ for all } t\geqslant 1,$$
where $d=\max\{\omega(e)\mid e\in E(G)\}$.
\end{Theorem}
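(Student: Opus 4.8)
The plan is to prove the regularity formula by induction on the number of vertices $n = |V(G)|$, using the root structure and the pendant edge supplied by Lemma \ref{path}. The base case $n=2$ is a single weighted edge (a star), which is covered by Lemma \ref{star}. When $G_\omega$ is a star graph, Lemma \ref{star} already gives exactly the claimed formula, so I may assume $G$ is not a star. The main tool will be the short exact sequence technology of Lemma \ref{exact} together with a colon-ideal decomposition: choosing a pendant edge $xy$ with $\deg_G(y)=1$ and $\omega(xy)\leqslant \omega(xv)$ for all $v\in N_G(x)$ (this exists by Lemma \ref{path}), I would analyze the powers $I^t$ through the exact sequence
$$
0 \longrightarrow \frac{S}{(I^t \colon (xy)^{\omega(xy)})}\xrightarrow{\ (xy)^{\omega(xy)}\ } \frac{S}{I^t} \longrightarrow \frac{S}{(I^t, (xy)^{\omega(xy)})} \longrightarrow 0,
$$
and then control the regularity of the two outer modules.

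The first key observation is that by Lemma \ref{colon} the colon ideal satisfies $(I^t \colon (xy)^{\omega(xy)}) = I^{t-1}$, since the strictly increasing hypothesis guarantees the divisibility conditions of that lemma at the leaf $y$. This lets me feed the induction on $t$: the left-hand module is $S/I^{t-1}$ up to a degree shift of $2\omega(xy)$. The second key observation concerns the quotient $S/(I^t,(xy)^{\omega(xy)})$. Here I would argue that adding the generator $(xy)^{\omega(xy)}$ and passing to this quotient effectively detaches the leaf $y$, reducing the combinatorics to the strictly increasing weighted tree $G'=G\setminus y$ on fewer vertices, for which the induction hypothesis on $n$ applies, possibly after separating the variable $y$ via Lemma \ref{sum1}. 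Because $G_\omega$ is strictly increasing, $d$ is unchanged when passing to $G'$ unless the maximal weight is attained only at the removed edge; I would choose the pendant edge at a leaf so that $\omega(xy)$ is not the unique maximum, keeping $d$ stable, which is exactly the content of parts (iv)–(v) of Lemma \ref{path}.

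Assembling these, the inductive step computes $\reg(S/(I^t \colon (xy)^{\omega(xy)})) = \reg(S/I^{t-1}) + 2\omega(xy) = 2d(t-2)+\reg(I(G_\omega))-1 + 2\omega(xy)$ for the left term, and a comparable expression for the right term from the $G'$ induction, then applies Lemma \ref{exact} to conclude $\reg(S/I^t) = 2d(t-1)+\reg(I(G_\omega))-1$, i.e. $\reg(I^t)=2d(t-1)+\reg(I(G_\omega))$. Throughout, the base of the regularity is pinned down by Proposition \ref{regI}, which gives the explicit value of $\reg(I(G_\omega))$ from the Taylor resolution (Theorem \ref{minimal}); this is what makes the constant term in the formula concrete rather than merely asymptotic.

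The hard part will be verifying that Lemma \ref{exact} yields an equality and not merely the inequality $\reg(N)\leqslant\max\{\reg(M),\reg(P)\}$. This requires showing that the regularities of the two outer terms do not collide in the forbidden way $\reg(P)=\reg(M)-1$, which forces a careful bookkeeping of the degree shifts induced by multiplication by $(xy)^{\omega(xy)}$ and by the separation of the leaf variable; the strict inequalities in the weight function are precisely what should separate these two regularities and secure the equality. A secondary subtlety is ensuring the quotient $S/(I^t,(xy)^{\omega(xy)})$ genuinely decomposes so that its regularity is governed by $G'$ together with the contribution of the monomial $(xy)^{\omega(xy)}$; handling the interaction at the attaching vertex $x$, where the weights around $x$ enter, is where the strictly increasing condition (as opposed to merely increasing, which allowed special vertices and nontrivial $\dstab$) will be essential.
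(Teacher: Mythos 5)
Your skeleton coincides with the paper's own proof: induct on $n$ and $t$, take the pendant edge $xy=v_{k-1}v_k$ supplied by Lemma \ref{path}, use Lemma \ref{colon} to identify $(I^t\colon (xy)^m)=I^{t-1}$ with $m=\omega(xy)$, pin the constant term with Proposition \ref{regI}, note that strict increase forces $m<d$ so that $d$ is stable under deleting the pendant edge, and close with the equality case of Lemma \ref{exact}. However, there is a genuine gap at exactly the point you dismiss as a ``secondary subtlety'': the right-hand term of your single exact sequence. One has $(I^t,(xy)^m)=I(G'_\omega)^t+((xy)^m)$ with $G'=G\setminus y$, and this ideal does \emph{not} decompose into ideals in disjoint sets of variables, because the added generator $(xy)^m$ and the generators $(xv)^{\omega(xv)}$ of $I(G'_\omega)$ share the vertex $x$. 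Hence Lemma \ref{sum1} cannot be applied to $S/(I^t,(xy)^m)$, the induction on $n$ cannot be fed into this term directly, and ``effectively detaches the leaf $y$'' is precisely the assertion that needs proof; as stated, the step fails.

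The paper closes this gap with a \emph{second} short exact sequence, coloning and summing with $y^m$ (not with $(xy)^m$):
$$0 \longrightarrow \frac{S}{((I^t,(xy)^{m})\colon y^{m})}(-m) \xrightarrow{\ \cdot\, y^{m}\ } \frac{S}{(I^t,(xy)^{m})} \longrightarrow \frac{S}{((I^t,(xy)^{m}),y^{m})} \longrightarrow 0.$$
Here $((I^t,(xy)^m)\colon y^m)=(x^m)+I(G'_\omega)^t=(x^m)+I(G''_\omega)^t$ with $G''=G\setminus(L_G(x)\cup\{x\})$ — the second equality uses Lemma \ref{path}(iii)--(v), since every generator of $I(G'_\omega)$ divisible by $x$ has $x$-degree at least $m$ — while $((I^t,(xy)^m),y^m)=I(G'_\omega)^t+(y^m)$. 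Both of these ideals \emph{do} split over disjoint variable supports ($x\notin\supp I(G''_\omega)$ and $y\notin\supp I(G'_\omega)$), so Lemma \ref{sum1}(2) together with the induction hypothesis on $n$ computes their regularities: the term $I(G'_\omega)^t+(y^m)$ carries exactly the target value $2d(t-1)+d+\sum_{e\in E(G)}(\omega(e)-1)+1$ (the $(m-1)$ from $y^m$ restores the weight of the deleted edge), and the other two terms are strictly smaller, so Lemma \ref{exact} yields equality in both sequences. Your bookkeeping for the colon term and your non-collision check via $m<d$ are correct and match the paper; what is missing is only, but essentially, this second splitting device — without it, or an equivalent way to decouple $x$ from $y$ inside $(I^t,(xy)^m)$, the inductive step does not close.
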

\begin{proof} Let $I=I(G_\omega)$. We will prove the statement  by induction on $t$ and $n = |V(G)|$. The case  $t=1$  is trivial. Now, assume that $t\ge 2$. If $G_\omega$ is a star graph, then the result follows from Lemma \ref{star}. Now, assume that $G_\omega$ is not a star graph with a root  $r$. By Lemma \ref{path}, there is a longest simple path in $G_\omega$ from $r$ in the form
$$r = v_0 \to v_1\to v_2\to \cdots \to v_{k-1}\to v_k$$
such that $k\geqslant 2$, $v_k$ is a leaf and $\omega(v_{k-1}v_k) \leqslant \omega(v_{k-1}v)$ for every $v\in N_G(v_{k-1})$. Let $m = \omega(v_{k-1}v_k)$. Then  $m < d$, since $(G_\omega,r)$ is a strictly increasing weighted tree.

Let  $G' = G\setminus \{v_k\}$ and $G'' = G\setminus (L_G(v_{k-1})\cup\{v_{k-1}\})$. Then  
\[
((I^t,(v_{k-1}v_k)^{m}):v_k^{m})=(v_{k-1}^{m})+I(G'_\omega)^t=(v_{k-1}^{m})+I(G''_\omega)^t,
\]
and
$$((I^t,(v_{k-1}v_k)^{m}),v_k^{m})=I^t+(v_{k}^{m})=I(G'_\omega)^t+(v_{k}^{m}).
$$
Let $d'=\max\{\omega(e)\mid e\in E(G'_\omega)\}$ and $d''=\max\{\omega(e)|e\in E(G''_\omega)\}$. Then $d'=d$, and $d''\leqslant d$. Note that  $(I^t:(v_{k-1}v_k)^{m})=I^{t-1}$ by Lemma \ref{colon}.

Using  Lemma \ref{sum1}(2), Proposition \ref{regI}, as well as  the induction on $n$ and $t$, we can conclude that 
\begin{align*}
	\reg((I^t:(v_{k-1}v_k)^{m}))&=\reg(I^{t-1})=2d(t-2)+d+\sum\limits_{e\in E(G)}(\omega(e)-1)+1\\
	&< 2d(t-1)+d+\sum\limits_{e\in E(G)}(\omega(e)-1)+1,\\
	\reg(((I^t,(v_{k-1}v_k)^{m}):v_k^{m}))&=\reg((v_{k-1}^{m})+I(G''_\omega)^{t})\\
&=2d''(t-1)+d''+\sum\limits_{e\in E(G'')}(\omega(e)-1)+1+(m-1)\\
	&< 2d(t-1)+d+\sum\limits_{e\in E(G)}(\omega(e)-1)+1,\\
\reg(((I^t,(v_{k-1}v_k)^{m}),v_k^{m}))&=\reg((v_{k}^{m})+I(G'_\omega)^{t})\\
&=2d'(t-1)+d'+\sum\limits_{e\in E(G')}(\omega(e)-1)+1+(m-1)\\
		&=2d(t-1)+d+\sum\limits_{e\in E(G)}(\omega(e)-1)+1.\\
\end{align*}
Applying Lemma \ref{exact} to the following two short exact sequences
\begin{gather*}
	\begin{matrix}
		0 & \!\!\longrightarrow\!\!\!  & \frac{S}{I^t:(v_{k-1}v_k)^{m}}(-2m) &\!\!\stackrel{\cdot (v_{k-1}v_k)^{m}} \longrightarrow\!\! & \frac{S}{I^t} & \!\!\longrightarrow\!\!  & \frac{S}{(I^t,(v_{k-1}v_k)^{m})} & \!\!\!\longrightarrow\!\!  & 0,\\
		0 & \!\!\longrightarrow\!\!\!  & \frac{S}{(I^t,(v_{k-1}v_k)^{m}):v_k^{m}}(-m) & \!\!\stackrel{\cdot v_k^{m}}\longrightarrow\!\! & \frac{S}{(I^t,(v_{k-1}v_k)^{m})} & \!\!\longrightarrow\!\!  & \frac{S}{((I^t,(v_{k-1}v_k)^{m}),v_k^{m})} & \!\!\!\longrightarrow\!\!  & 0,\\
	\end{matrix}
\end{gather*}
we have $\reg(S/I(G_\omega)^t)=2d(t-1)+\reg(S/I(G_\omega))$. The proof is complete. 
\end{proof}

\medskip
The following example shows that Theorem \ref{reg-power} does not hold in the case $G_\omega$ is an increasing but not strictly increasing weighted tree.

\begin{Example}\label{ExampleReg} Let $G$ be a path of length $3$ with the edge set $E=\{x_1x_2,x_2x_3,x_3x_4\}$. Consider the weight function $\omega$ such that
$$\omega(x_1x_2)=6,\omega(x_2x_3)=\omega(x_3x_4)=5.$$
Then, $I(G_\omega) = ((x_1x_2)^6,(x_2x_3)^5,(x_3x_4)^5)$. Using Macaulay2, we found that
$$\reg (I(G_\omega) )= 16 \text{ and } \reg (I(G_\omega)^2 )= 30.$$
Since $d=6$, we have $\reg(I(G_\omega)^t) >2 d(t-1)+\reg(I(G_\omega))$ for $t=2$.
\end{Example}

\medskip

\hspace{-6mm} {\bf Acknowledgments}
		
		\vspace{3mm}
		\hspace{-6mm}  The fourth author is  supported by the Natural Science Foundation of Jiangsu Province (No. BK20221353) and the National Natural Science Foundation of China (12471246). The first and third authors are partially supported by Vietnam National Foundation for Science and Technology Development (Grant \#101.04-2024.07). The main part of this
work was done during the third author's visit to Soochow University in Suzhou, China.  He would like to express his gratitude to Soochow University for its warm hospitality.

		\medskip
		\hspace{-6mm} {\bf Data availability statement}
		
		\vspace{3mm}
		\hspace{-6mm}  The data used to support the findings of this study are included within the article.
		
		\medskip
		\hspace{-6mm} {\bf Conflict of interest}
		
		\vspace{3mm}
		\hspace{-6mm}  The authors declare that they have no competing interests.

\end{document}